\newtheorem{theorem}{Theorem}[section]
\newtheorem{lemma}[theorem]{Lemma}
\theoremstyle{definition}
\theoremstyle{remark}
\newtheorem{remark}[theorem]{Remark}
\numberwithin{equation}{section}
\DeclareMathOperator{\Ima}{Im}
\DeclareMathOperator{\Rea}{Re}
\begin{document}
\title[]{Some character generating functions on Banach algebras.}
\author{ C. Tour\'e, F. Schulz and R. Brits}
\address{Department of Mathematics, University of Johannesburg, South Africa}
\email{cheickkader89@hotmail.com, francoiss@uj.ac.za, rbrits@uj.ac.za}
\subjclass[2010]{15A60, 46H05, 46H10, 46H15, 47B10}
\keywords{Banach algebra, spectrum, character, linear functional}

\begin{abstract}
We consider a multiplicative variation on the classical Kowalski-S\l{}odkowski Theorem which identifies the characters among the collection of all functionals on a Banach algebra $A$. In particular we show that, if $A$ is a $C^*$-algebra, and if $\phi:A\mapsto\mathbb C$ is a continuous function satisfying $\phi(\mathbf 1)=1$ and $\phi(x)\phi(y) \in \sigma(xy)$ for all $x,y\in A$ (where $\sigma$ denotes the spectrum), then $\phi$ generates a corresponding character $\psi_\phi$ on $A$ which coincides with $\phi$ on the principal component of the invertible group of $A$. We also show that, if $A$ is any Banach algebra whose elements have totally disconnected spectra, then, under the aforementioned conditions, $\phi$ is always a character.  
\end{abstract}
\parindent 0mm

\maketitle

\section{Introduction}\label{intro}

In this paper $A$ will always be a complex and unital Banach algebra, with the unit denoted by $\mathbf 1$. The invertible group of $A$ will be denoted by $G(A)$, and the connected component of $G(A)$ containing $\mathbf 1$, by $G_{\mathbf 1}(A)$. It is  well known (see for instance \cite[Theorem 3.3.7]{aupetit1991primer}) that
\begin{equation}\label{G1} 
G_{\mathbf 1}(A)=\{e^{x_1}\cdots e^{x_k}: k\in\mathbb N,\ x_j\in A\}.
\end{equation}
 If $x\in A$ then the spectrum of $x$ is the (necessarily non-empty and compact) set $\sigma(x):=\{\lambda\in\mathbb C:\lambda\mathbf 1-x\notin G(A)\}.$
 A character of $A$ is, by definition, a linear functional $\chi:A\rightarrow\mathbb C$ which is simultaneously  multiplicative i.e. $\chi(xy)=\chi(x)\chi(y)$ holds for all $x,y\in A$.  
 Depending on the specific algebra, or class of algebras, characters may or may not exist. One immediately recalls the famous result of Gleason, Kahane, and \.{Z}elazko, \cite{Gleason,ZelazkoKahane,Zelazko} which identifies the characters among the dual space members of $A$ via a spectral condition:
\begin{theorem}[Gleason-Kahane-\.{Z}elazko]\label{origin}
	Let $A$ be a Banach algebra. Then $\phi\in A^\prime$, the dual of $A$, is a character of $A$ if and only if $\phi(x)\in\sigma(x)$ for each $x\in A$. 
\end{theorem}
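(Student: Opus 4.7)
The forward direction is immediate. A character must satisfy $\phi(\mathbf 1)=1$ (since $\phi(\mathbf 1)=\phi(\mathbf 1)^2\ne 0$), and for any $x\in A$ the element $\phi(x)\mathbf 1-x$ lies in $\ker\phi$; were it invertible with inverse $y$, applying $\phi$ to $(\phi(x)\mathbf 1-x)y=\mathbf 1$ would give the contradiction $0=1$. Hence $\phi(x)\in\sigma(x)$.

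For the converse, the plan rests on a Hadamard factorization argument. First observe that $\sigma(\mathbf 1)=\{1\}$ forces $\phi(\mathbf 1)=1$, and the spectral radius bound yields $|\phi(x)|\le\rho(x)\le\|x\|$. Fix $x\in A$ and consider the entire function $F_x(\lambda):=\phi(e^{\lambda x})$. The spectral mapping theorem $\sigma(e^{\lambda x})=e^{\lambda\sigma(x)}\subseteq\mathbb C\setminus\{0\}$ combined with the hypothesis forces $F_x$ to be zero-free, while the bound $|F_x(\lambda)|\le\|e^{\lambda x}\|\le e^{|\lambda|\|x\|}$ shows $F_x$ has order at most one. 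Hadamard's theorem then delivers $F_x(\lambda)=e^{a+b\lambda}$, and the conditions $F_x(0)=1$, $F_x'(0)=\phi(x)$ pin down $a=0$, $b=\phi(x)$, so that
\[
\phi(e^{\lambda x})=e^{\lambda\phi(x)}\qquad(\lambda\in\mathbb C,\; x\in A).
\]

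Matching Taylor coefficients in $\lambda$ yields $\phi(x^n)=\phi(x)^n$ for every $n$ and every $x$. Substituting $x+y$ into the $n=2$ relation produces the Jordan identity $\phi(xy+yx)=2\phi(x)\phi(y)$, which is already multiplicativity in the commutative setting. The main technical obstacle lies in the noncommutative case, where one must upgrade this symmetrised identity to the full relation $\phi(xy)=\phi(x)\phi(y)$. My plan to close this gap is to exploit the contrapositive of the spectral condition, namely that $\ker\phi$ contains no invertible elements, and then run a further analytic argument (for instance, via entire functions of the form $\lambda\mapsto\phi(e^{\lambda x}ye^{-\lambda x})$ or $\lambda\mapsto\phi((\mathbf 1-\lambda a)^{-1})$ for $a\in\ker\phi$) to force $\ker\phi$ to be a two-sided ideal. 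Once $\ker\phi$ is an ideal, $\phi$ factors as a nonzero algebra homomorphism $A/\ker\phi\to\mathbb C$ and multiplicativity follows.
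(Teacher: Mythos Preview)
The paper does not give its own proof of this theorem; it is quoted as a classical result with references to Gleason, Kahane--\.{Z}elazko, and \.{Z}elazko, so there is nothing in the paper to compare your argument against. What you have written is in fact the classical \.{Z}elazko proof up to the Jordan identity $\phi(xy+yx)=2\phi(x)\phi(y)$, and that part is correct.

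The genuine gap is the final step, which you acknowledge but do not carry out. Your suggested analytic attacks (entire functions $\lambda\mapsto\phi(e^{\lambda x}ye^{-\lambda x})$, or resolvent expansions) are more machinery than is needed, and it is not clear they would succeed without further ideas. The standard closure is purely algebraic and short: take $a\in\ker\phi$, $x\in A$, and set $\alpha:=\phi(ax)$. The Jordan identity applied to $a$ and $x$ gives $\phi(xa)=-\alpha$. Applying it to $a$ and $xax$ gives $\phi(axax)+\phi(xaxa)=0$. But $\phi(axax)=\phi\bigl((ax)^2\bigr)=\phi(ax)^2=\alpha^2$ and $\phi(xaxa)=\phi\bigl((xa)^2\bigr)=\phi(xa)^2=\alpha^2$, so $2\alpha^2=0$ and $\alpha=0$. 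Thus $\ker\phi$ is a two-sided ideal and multiplicativity follows. You should replace your open-ended plan with this computation.
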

A perhaps lesser known, but stronger result, due to Kowalski and S\l{}odkowski \cite{Slodkowski}, identifies the characters among all complex-valued functions on $A$ via a spectral condition:

\begin{theorem}[Kowalski-S\l{}odkowski]\label{allfunctions}
Let $A$ be a Banach algebra. Then a function $\phi:A\rightarrow\mathbb C$ is a character of $A$ if and only if $\phi$  satisfies
\begin{itemize}
	\item[(i)]{$\phi(0)=0$,}
	\item[(ii)]{$\phi(x)-\phi(y)\in\sigma(x-y)$ for every $x,y\in A$. }
\end{itemize} 
	
\end{theorem}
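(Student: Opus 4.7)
The forward (``only if'') direction is immediate from the Gleason-Kahane-\.{Z}elazko theorem: for a character $\phi$, linearity forces $\phi(0)=0$, and $\phi(x)-\phi(y)=\phi(x-y)\in\sigma(x-y)$ by Theorem \ref{origin}. The work is in the converse.

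For the converse I would first extract the easy consequences of (i) and (ii). Setting $y=0$ in (ii) gives $\phi(x)\in\sigma(x)$ for every $x\in A$, so in particular $|\phi(x)|\leq\|x\|$; and applying the spectral radius inequality to (ii) gives the Lipschitz bound $|\phi(x)-\phi(y)|\leq\|x-y\|$, so $\phi$ is continuous. In view of Theorem \ref{origin}, it suffices to prove that $\phi$ is linear; the character property will then follow because $\phi(x)\in\sigma(x)$ has already been established.

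The central step is to show that, for every $x,y\in A$, the continuous function $f_{x,y}:\mathbb{C}\to\mathbb{C}$ defined by $f_{x,y}(\lambda):=\phi(x+\lambda y)$ is \emph{affine}. Hypothesis (ii) gives
\[
f_{x,y}(\lambda)-f_{x,y}(\mu)\in(\lambda-\mu)\,\sigma(y),\qquad\lambda,\mu\in\mathbb{C},
\]
so every difference quotient of $f_{x,y}$ is trapped in the compact set $\sigma(y)$. The task is to upgrade this from a mere Lipschitz estimate to affineness, and this is the main obstacle. The rigidity has to come from the fact that $\lambda\mapsto x+\lambda y-\zeta\mathbf{1}$ is entire $A$-valued, so that by Vesentini's theorem $\lambda\mapsto\log\sup\{|\eta|:\eta\in\sigma(x+\lambda y-\zeta\mathbf{1})\}$ is subharmonic for every $\zeta\in\mathbb{C}$; combining this subharmonicity with the observation that $f_{x,y}(\lambda)\in\sigma(x+\lambda y)$ (a scarcity/analytic-multifunction argument of Słodkowski type) forces $f_{x,y}$ to be holomorphic, and the linear growth then forces it to be a polynomial of degree at most one.

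Given affineness, write $f_{x,y}(\lambda)=\phi(x)+\lambda\alpha(x,y)$. Taking $x=0$ and $\lambda=1$ identifies $\alpha(0,y)=\phi(y)$, so $\phi(\lambda y)=\lambda\phi(y)$, giving complex homogeneity. For additivity I would consider the two-variable map $F(\lambda,\mu):=\phi(\lambda x+\mu y)$. By the previous step $F$ is affine in each variable separately, so it has the form $F(\lambda,\mu)=c_{00}+c_{10}\lambda+c_{01}\mu+c_{11}\lambda\mu$, and homogeneity already pins down $c_{00}=0$, $c_{10}=\phi(x)$, $c_{01}=\phi(y)$. Restricting to the diagonal and using homogeneity again gives $\lambda\phi(x+y)=F(\lambda,\lambda)=(\phi(x)+\phi(y))\lambda+c_{11}\lambda^{2}$ for all $\lambda\in\mathbb{C}$, which forces $c_{11}=0$ and $\phi(x+y)=\phi(x)+\phi(y)$. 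Hence $\phi$ is linear, and Theorem \ref{origin} delivers that $\phi$ is a character.
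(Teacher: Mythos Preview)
The paper does not contain its own proof of this statement: Theorem~\ref{allfunctions} is quoted in the Introduction as a known result, with a citation to the original Kowalski--S\l{}odkowski paper, and is used as a black box later on. So there is no ``paper's proof'' to compare against.

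As for the proposal itself: the overall architecture matches the standard route to the theorem. The easy consequences you extract from (i)--(ii) are correct, the reduction to linearity via Theorem~\ref{origin} is the right move, and the bookkeeping you do \emph{after} assuming that each one-variable slice $f_{x,y}(\lambda)=\phi(x+\lambda y)$ is affine (homogeneity, then the bilinear form argument giving additivity) is clean and valid.

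The single substantive gap is exactly the step you already flag as ``the main obstacle'': upgrading the difference-quotient containment
\[
f_{x,y}(\lambda)-f_{x,y}(\mu)\in(\lambda-\mu)\,\sigma(y)
\]
to holomorphy (hence affineness) of $f_{x,y}$. Writing that this follows by ``combining Vesentini's subharmonicity with a scarcity/analytic-multifunction argument of S\l{}odkowski type'' names the right toolbox but does not supply an argument. This passage is the entire technical content of the Kowalski--S\l{}odkowski paper: one needs a genuine lemma to the effect that a continuous selection from an analytic set-valued map whose difference quotients lie in a fixed compact set is automatically holomorphic, and that lemma requires real work with subharmonic functions. Without it, what you have is an accurate outline rather than a proof.
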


\begin{remark}
 It is easy to see that the Kowalski-S\l{}odkowski Theorem can be more economically formulated as $$\phi(x)+\phi(y)\in\sigma(x+y) \mbox{ for every } x, y\in A\Leftrightarrow\phi\mbox{ is a character of }A.$$
\end{remark}
	
One is now naturally led to ask whether there exist ``multiplicative" versions of Theorem~\ref{origin} and Theorem~\ref{allfunctions}. That is, under what conditions is a function with multiplicative properties, perhaps involving the spectrum, a character? The problem seems thorny, but some positive results were obtained in 
\cite{formesmultiplicatives} and \cite{TOURE2}.

\begin{theorem}[{\cite[Corollary 2.2]{TOURE2}}]\label{toure}
	Let $A$ be a Banach algebra. Then a multiplicative function $\phi:A\rightarrow\mathbb C$ satisfying $\phi(x)\in\sigma(x)$ for each $x\in A$ is a character if and only if for each $x\in A$ the map 
	\begin{equation}\label{subharmonic}
	\lambda\mapsto\left|\phi(x-\lambda\mathbf 1)+\lambda\right|
	\end{equation}
	is subharmonic on $\mathbb C$.
\end{theorem}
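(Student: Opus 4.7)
The forward direction is immediate: if $\phi$ is a character, then $\phi(\mathbf 1) = 1$ (from multiplicativity together with $\phi(\mathbf 1) \in \sigma(\mathbf 1) = \{1\}$), so by linearity $\phi(x - \lambda\mathbf 1) + \lambda = \phi(x)$ is a constant whose modulus is trivially subharmonic.

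For the converse the key is to show that $f_x(\lambda) := \phi(x - \lambda\mathbf 1) + \lambda$ is identically $\phi(x)$. The spectral hypothesis yields $f_x(\lambda) \in (\sigma(x) - \lambda) + \lambda = \sigma(x)$, so $|f_x|$ is bounded above by the spectral radius $r(x)$; being also subharmonic by hypothesis, the classical Liouville-type theorem for subharmonic functions (a subharmonic function on $\mathbb C$ bounded above is constant) forces $|f_x|$ to be constant on $\mathbb C$. To upgrade this to constancy of $f_x$ itself, I would replay the argument with $x$ replaced by $x - c\mathbf 1$ for every $c \in \mathbb C$: a direct computation gives $f_{x - c\mathbf 1}(\lambda) = f_x(\lambda + c) - c$, so constancy of $|f_{x - c\mathbf 1}|$ translates into $|f_x(\mu) - c|$ being constant in $\mu$ for every $c \in \mathbb C$. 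A complex-valued map equidistant from every point of the plane must be constant: expanding $|f_x(\mu) - c|^2 - |f_x(\mu') - c|^2 = 0$ yields a real-linear equation in $c$ that forces $f_x(\mu) = f_x(\mu')$. Hence $f_x \equiv f_x(0) = \phi(x)$, i.e.\ one has the translation identity $\phi(x - \lambda\mathbf 1) = \phi(x) - \lambda$ for all $x \in A$ and $\lambda \in \mathbb C$.

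I then combine this translation identity with multiplicativity to obtain linearity. If $y \in G(A)$, factoring $x + \lambda y = y(y^{-1}x + \lambda\mathbf 1)$ and using multiplicativity together with the translation identity gives $\phi(x + \lambda y) = \phi(y)(\phi(y^{-1}x) + \lambda) = \phi(x) + \lambda\phi(y)$, since $\phi(y)\phi(y^{-1}x) = \phi(x)$ by multiplicativity. For general $y$, choose $c \in \mathbb C$ with $y + c\mathbf 1 \in G(A)$ (possible as $\sigma(y)$ is compact) and combine the invertible case with the translation identity to get $\phi(x + y) = \phi(x + y + c\mathbf 1) - c = \phi(x) + \phi(y + c\mathbf 1) - c = \phi(x) + \phi(y)$. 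Homogeneity follows from $\phi(\lambda\mathbf 1) = \lambda$ (immediate from the translation identity together with $\phi(0) \in \sigma(0) = \{0\}$) and $\phi(\lambda x) = \phi(\lambda\mathbf 1)\phi(x) = \lambda\phi(x)$. The main obstacle throughout is the passage from $|f_x|$ subharmonic to $f_x$ itself constant; Liouville alone gives only constancy of the modulus, and the shift-by-$c\mathbf 1$ trick is what promotes this to constancy of $f_x$.
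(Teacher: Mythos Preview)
The paper does not supply its own proof of this theorem: it is merely quoted from \cite{TOURE2} as background, so there is nothing to compare your argument against line by line. That said, your proof is sound and self-contained. The only step worth a second look is the upgrade from constancy of $|f_x|$ to constancy of $f_x$; your shift-by-$c\mathbf 1$ trick, combined with the observation that a map equidistant from every complex number is constant, handles this cleanly. The remaining deduction of additivity from the translation identity via the factorisation $x+\lambda y = y(y^{-1}x+\lambda\mathbf 1)$ for invertible $y$, and the subsequent reduction of general $y$ to this case, is standard and correct. In short: the paper offers no proof here, and yours stands on its own.
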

One may show (\cite[Theorem 2.1]{TOURE2}) that the subharmonic condition on\eqref{subharmonic} in Theorem \ref{toure} can be replaced with the requirement that $\lambda\mapsto\phi(x-\lambda\mathbf 1)$ is an entire function for each $x\in A$.
\begin{theorem}[Maouche]
Let $A$ be a Banach algebra, and let $\phi:A\rightarrow\mathbb C$ be a multiplicative function satisfying $\phi(x)\in\sigma(x)$ for each $x\in A$. Then, corresponding to $\phi$, there exists a unique character on $A$ which agrees with $\phi$ on $G_{\mathbf 1}(A)$. 
\end{theorem}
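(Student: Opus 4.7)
My plan is to construct a character $\chi$ out of $\phi$ by applying the principal branch of the logarithm to $\phi\circ\exp$ on a neighbourhood of zero, extending homogeneously, and then invoking the additive Kowalski--S\l{}odkowski Theorem~\ref{allfunctions}. For $x\in A$ with $\|x\|<\pi/2$, the compact set $\sigma(x)$ lies in a region where $\exp$ is injective, so the membership $\phi(e^x)\in\sigma(e^x)=\exp\sigma(x)$ singles out a unique $\chi(x)\in\sigma(x)$ satisfying $e^{\chi(x)}=\phi(e^x)$. For arbitrary $x\in A$, I would set $\chi(x):=n\chi(x/n)$ for any $n$ with $\|x/n\|<\pi/2$. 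That this is independent of $n$ follows from the identity $\phi(e^{x/n})=\phi(e^{x/(nm)})^m$, which yields $e^{\chi(x/n)}=e^{m\chi(x/(nm))}$; the $2\pi i\mathbb Z$ ambiguity vanishes because both exponents are $O(1/n)$ in modulus. The resulting $\chi:A\to\mathbb C$ satisfies $\chi(0)=0$, $\chi(\mathbf 1)=1$, and $\chi(x)\in\sigma(x)$ for every $x\in A$.

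The heart of the proof is to verify $\chi(x)-\chi(y)\in\sigma(x-y)$ for every pair $x,y\in A$. Fix $x,y$ and, for $n$ large, set $w_n:=\log(e^{x/n}e^{-y/n})$ using the principal branch near $\mathbf 1$. The Baker--Campbell--Hausdorff expansion gives $w_n=(x-y)/n+O(1/n^2)$ in norm. Multiplicativity of $\phi$, combined with $\phi(e^{-y/n})=\phi(e^{y/n})^{-1}=e^{-\chi(y/n)}$, yields
\[
\phi(e^{w_n})=\phi(e^{x/n})\phi(e^{-y/n})=e^{\chi(x/n)-\chi(y/n)},
\]
while $e^{\chi(w_n)}=\phi(e^{w_n})$ by construction. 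Since all relevant exponents have modulus $O(1/n)$, the integer ambiguity drops for $n$ large and $\chi(w_n)=\chi(x/n)-\chi(y/n)$. Multiplying by $n$ gives
\[
\chi(x)-\chi(y)=n\chi(w_n)\in\sigma(nw_n)=\sigma\bigl(x-y+O(1/n)\bigr),
\]
and Newburgh's upper semicontinuity of the spectrum, applied as $n\to\infty$, then delivers $\chi(x)-\chi(y)\in\sigma(x-y)$.

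Theorem~\ref{allfunctions} now declares $\chi$ a character. Agreement with $\phi$ on $G_{\mathbf 1}(A)$ follows at once from \eqref{G1}: the equality $\chi(e^x)=e^{\chi(x)}=\phi(e^x)$ combines with the multiplicativity of both $\chi$ and $\phi$ to give $\chi(e^{x_1}\cdots e^{x_k})=\phi(e^{x_1}\cdots e^{x_k})$. For uniqueness, if $\chi'$ is another such character then $e^{\chi'(x)}=\chi'(e^x)=\phi(e^x)=e^{\chi(x)}$, so $\chi'-\chi$ is a continuous linear functional taking values in the discrete set $2\pi i\mathbb Z$ and must vanish identically. The principal technical obstacle is the middle paragraph: one needs a norm-level control of the Baker--Campbell--Hausdorff remainder, a uniform removal of the $2\pi i\mathbb Z$ ambiguities in all three logarithms, and a clean limit-passage through upper semicontinuity of $\sigma$; everything else is routine from the branchwise construction and the multiplicativity of $\phi$.
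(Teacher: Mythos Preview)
The paper does not supply its own proof of this statement: it is quoted in the introduction as a known result of Maouche (from \cite{formesmultiplicatives}) and serves only as background. There is therefore nothing in the paper to compare your argument against.

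That said, your proof is sound. The construction $\chi(x):=n\,\mathrm{Log}\,\phi(e^{x/n})$ is well defined because $|\chi(x/n)|\le\rho(x/n)\le\|x\|/n$, so the $2\pi i\mathbb Z$ ambiguity you flag indeed disappears once $\|x\|/n<\pi$. The key step $\chi(x)-\chi(y)\in\sigma(x-y)$ goes through exactly as you outline: with $w_n=\log(e^{x/n}e^{-y/n})$ one has $nw_n\to x-y$ in norm by the first-order BCH estimate, the fixed number $\chi(x)-\chi(y)=n\chi(w_n)$ lies in $\sigma(nw_n)$ for every large $n$, and upper semicontinuity of the spectrum then places it in the closed set $\sigma(x-y)$. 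The appeal to Theorem~\ref{allfunctions}, the verification that $\chi(e^{x})=e^{\chi(x)}=\phi(e^{x})$ for all $x$ (via $\phi(e^{x})=\phi(e^{x/n})^n$), and the uniqueness argument using that a linear functional with values in $2\pi i\mathbb Z$ must vanish, are all correct. Note in particular that your argument nowhere uses continuity of $\phi$, in keeping with the hypotheses of Maouche's theorem.
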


Among a number of results for $C^\star$-algebras (on the above-mentioned topic), it is further shown, in \cite{TOURE2}, that for the particular case of von Neumann algebras, a continuous multiplicative function with values  $\phi(x)\in\sigma(x)$, for each $x\in A$, is always a character.  

The current paper is motivated by \emph{multiplicatively} spectrum preserver problems which were studied already in 1997 by B. Aupetit in \cite[Theorem 3.5, p.74]{banach97}, and later also in \cite{unitalspec, molnar1, raoroy, raoroy2}, as well as the Kowalski-S\l{}odkowski Theorem. We shall consider a function $\phi:A\rightarrow\mathbb C$ (not assumed to be linear or multiplicative) satisfying the following conditions:
\begin{itemize}
	\item[(P1)] $\phi(x)\phi(y) \in \sigma(xy)$ for all $x,y\in A$,
	\item[(P2)] $\phi(\mathbf 1)=1$,
	\item[(P3)] $\phi$ is  continuous on $A$.
\end{itemize}

In \cite{unitalspec} Hatori \emph{et. al.} show that a multiplicative Kowalski-S\l{}odkowski Theorem is generally not possible. In particular, even for commutative $C^*$-algebras, the conditions (P1)--(P2) are not enough to guarantee that $\phi$ is a character (see also \cite[p.56]{TOURE2} and \cite[p.44--45]{formesmultiplicatives}). What we want to show here is that some positive results can be obtained (with (P3) added to the list) for at least two classes of Banach algebras, one of which is general $C^*$-algebras. Our proofs rely on the Lie-Trotter Formula, stated below, and the additive Kowalski-S\l{}odkowski Theorem stated above. The current paper seems to be the first attempt to address the issue raised by Hatori \emph{et. al.}, and so the paper is essentially self contained.

Obviously, if $\phi$ satisfies (P1)--(P2), then
\begin{equation}\label{inspec}
x\in A\Rightarrow \phi\left(x\right)\in\sigma(x)
\end{equation}
and
\begin{equation}\label{obs}
x\in G(A)\Rightarrow \phi\left(x^{-1}\right)=\phi(x)^{-1}
\end{equation}
from which it follows that
\begin{equation}\label{obs2}
x\in G(A)\Rightarrow \phi\left(\lambda x\right)=\lambda\phi(x)\mbox{ for all }\lambda\in\mathbb C.
\end{equation}
 
In the remainder of this paper we shall make use of the classical: 

\begin{theorem}[Lie-Trotter Formula, {\cite[p.67]{aupetit1991primer}}]\label{LieTrot}
Let $A$ be a Banach algebra, and let $x,y\in A$. Then
$$\lim_{n\rightarrow\infty}\left(e^{x/n}e^{y/n}\right)^n=e^{x+y}.$$	
\end{theorem}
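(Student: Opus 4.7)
The plan is to compare $(e^{x/n}e^{y/n})^n$ with $(e^{(x+y)/n})^n$, which already equals $e^{x+y}$ exactly, and to convert a one-step error estimate into an $n$-step error estimate by means of a standard telescoping identity. Writing $a_n = e^{x/n}e^{y/n}$ and $b_n = e^{(x+y)/n}$, the first step is to obtain a quantitative bound of the form
\[
\|a_n - b_n\| = O(1/n^2),
\]
where the implicit constant depends only on $\|x\|$ and $\|y\|$.

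To get this, I would expand each exponential as its defining power series. The constant terms are both $\mathbf 1$, the coefficients of $1/n$ agree (both contribute $x+y$), and the coefficients of $1/n^2$ differ only by the fixed element $(xy-yx)/2$, since $(x+y)^2 = x^2+xy+yx+y^2$ while the cross product produces $x^2/2 + xy + y^2/2$. The remaining tail is bounded using the absolute convergence of the exponential series: for $n \ge \|x\|$, one has $\bigl\|e^{x/n}-\mathbf 1 - x/n - x^2/(2n^2)\bigr\| \le e^{\|x\|}/n^3$, and similarly for $y$ and for $x+y$. Multiplying out and collecting terms then yields the claimed $O(1/n^2)$ estimate.

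The second step is the telescoping identity
\[
a_n^n - b_n^n = \sum_{k=0}^{n-1} a_n^{k}(a_n - b_n)\, b_n^{n-1-k}.
\]
Since $\|a_n\| \le e^{(\|x\|+\|y\|)/n}$ and $\|b_n\| \le e^{\|x+y\|/n}$, both $\|a_n\|^{k}$ and $\|b_n\|^{n-1-k}$ are uniformly dominated by $M := e^{\|x\|+\|y\|}$. Submultiplicativity of the norm therefore gives
\[
\|a_n^n - b_n^n\| \le n\, M\, \|a_n - b_n\| = O(1/n),
\]
which tends to $0$. Combined with the exact identity $b_n^n = e^{x+y}$, this yields $a_n^n \to e^{x+y}$.

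The main obstacle is really only careful bookkeeping in the first step: one must verify that the discrepancy between $a_n$ and $b_n$ is genuinely $O(1/n^2)$ in norm, with a constant independent of $n$, so that when it is amplified by the factor $n$ arising from the telescoping sum the resulting bound still vanishes. No deeper analytic or algebraic input is required beyond submultiplicativity of the Banach algebra norm and absolute convergence of the exponential series; the noncommutativity of $A$ enters only through the harmless $(xy-yx)/(2n^2)$ term, which is absorbed into the error.
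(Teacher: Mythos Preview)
Your argument is correct and is essentially the standard proof of the Lie--Trotter formula in a Banach algebra: a second-order Taylor comparison gives $\|a_n-b_n\|=O(1/n^2)$, and the telescoping identity $a_n^n-b_n^n=\sum_{k=0}^{n-1}a_n^{k}(a_n-b_n)b_n^{\,n-1-k}$ together with the uniform bound on $\|a_n\|^k\|b_n\|^{n-1-k}$ converts this into an $O(1/n)$ bound for the difference of $n$th powers. One tiny slip: since you bound $\|a_n\|^k$ and $\|b_n\|^{n-1-k}$ each by $M=e^{\|x\|+\|y\|}$, the telescoping estimate should read $n\,M^2\,\|a_n-b_n\|$ rather than $n\,M\,\|a_n-b_n\|$; this is of course immaterial. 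Also, the restriction $n\ge\|x\|$ is unnecessary for your tail bound, since $\sum_{k\ge 3}\|x\|^k/(k!\,n^k)\le n^{-3}\sum_{k\ge 3}\|x\|^k/k!\le e^{\|x\|}/n^3$ already holds for all $n\ge 1$.

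As for comparison with the paper: the paper does not supply a proof of this theorem. It is quoted as a classical fact from Aupetit's \emph{A Primer on Spectral Theory} and used only as a tool in the proofs of Theorem~\ref{r5} and Lemma~\ref{finduc}(iii). Your write-up is precisely the argument one finds in standard references, so there is nothing further to compare.
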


\section{Totally disconnected spectra}\label{disconnect}

Throughout this section, $A$ is a complex and unital Banach algebra for which $\sigma (x)$ is totally disconnected for each $x \in A$, and $\phi:A\rightarrow\mathbb C$ is a map which satisfies the properties (P1)--(P3) in Section~\ref{intro}. 

\begin{lemma}\label{r1}
	Let $x \in A$. Then:
	\begin{itemize}
		\item[\textnormal{(i)}]
		$\phi\left(\lambda \mathbf{1} + x\right) = \lambda + \phi(x)$ for each $\lambda \in \mathbb{C}$.
		\item[\textnormal{(ii)}]
		$\phi\left(\lambda x\right) = \lambda\phi(x)$ for each $\lambda \in \mathbb{C}$.
	\end{itemize}
\end{lemma}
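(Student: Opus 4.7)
The plan rests on the basic topological fact that a continuous map from a connected space into a totally disconnected space is constant, combined with the spectral containment \eqref{inspec}.

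First I would record $\phi(0)=0$. This follows from (P1) with $y=\mathbf 1$, since $\phi(0)=\phi(0)\phi(\mathbf 1)\in\sigma(0)=\{0\}$.

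For (i), fix $x\in A$ and consider the function
\[
f:\mathbb{C}\to\mathbb{C},\qquad f(\lambda):=\phi(\lambda\mathbf 1+x)-\lambda.
\]
By (P3) $f$ is continuous, and by \eqref{inspec} together with the spectral mapping property $\sigma(\lambda\mathbf 1+x)=\lambda+\sigma(x)$, the image of $f$ is contained in $\sigma(x)$. Since $\mathbb{C}$ is connected and $\sigma(x)$ is totally disconnected by hypothesis, $f$ must be constant. Evaluating at $\lambda=0$ gives $f\equiv\phi(x)$, which is precisely (i).

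For (ii), fix $x\in A$ and consider, on the connected set $\mathbb{C}\setminus\{0\}$, the map
\[
g(\lambda):=\frac{\phi(\lambda x)}{\lambda}.
\]
It is continuous on $\mathbb{C}\setminus\{0\}$ and, since $\phi(\lambda x)\in\sigma(\lambda x)=\lambda\,\sigma(x)$, it takes values in $\sigma(x)$. The same connectedness argument forces $g$ to be constant, and evaluating at $\lambda=1$ yields $g\equiv\phi(x)$, i.e.\ $\phi(\lambda x)=\lambda\phi(x)$ for every $\lambda\neq0$. The case $\lambda=0$ reduces to $\phi(0)=0$, which was settled at the outset. (Note that the case of invertible $x$ and $\lambda\neq 0$ is already covered by \eqref{obs2} without any disconnectedness assumption; the novelty here is that total disconnectedness of $\sigma(x)$ allows us to bypass invertibility.)

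The only nontrivial ingredient is the topological lemma on constancy of continuous functions from a connected space into a totally disconnected one, so there is no real obstacle; the main thing to be careful about is to exclude $\lambda=0$ from the domain in part (ii) (where the division is not legitimate) and to handle that case separately using $\phi(0)=0$.
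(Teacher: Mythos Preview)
Your argument is correct and is essentially the same as the paper's: both parts use the fact that a continuous map from a connected domain ($\mathbb{C}$ or $\mathbb{C}\setminus\{0\}$) into the totally disconnected set $\sigma(x)$ must be constant. Your treatment is in fact slightly more careful than the paper's, which omits explicit mention of the case $\lambda=0$ in (ii); your observation that $\phi(0)=0$ (immediate from \eqref{inspec}) fills that small gap.
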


\begin{proof}
(i) If we define $\alpha_\lambda:=\phi(\lambda \mathbf 1 +x)-\lambda,$ then it follows that  $\alpha_\lambda$ is a continuous function on $\mathbb C$ with values belonging to  $\sigma(x)$. So, since $ \mathbb C$ is connected and $\sigma(x)$ is totally disconnected, we infer that $ \phi(\lambda \mathbf1+x)-\lambda$ is constant on $\mathbb C$. Thus $\phi(\lambda \mathbf1+x)=\lambda+\phi(x)$ holds for all $\lambda\in\mathbb C$.\\
(ii) If we define $\alpha_\lambda :=\phi(\lambda x)/\lambda,$ then it follows that  $\alpha_\lambda$ is a continuous function on  $\mathbb C\setminus\{0\}$ with values belonging to  $\sigma(x)$. Using a similar reasoning as in (i) we infer that $\phi(\lambda x)=\lambda\phi( x)$.
\end{proof}

\begin{lemma}\label{r2}
	Let $r \in A$ and suppose that $\sigma(r) = \left\{1, k \right\}$, where $k\not=0$ and $k \neq 1$. If $\phi(r) = 1$, then $\phi \left(r^{n}\right) = 1$ for all $n \in \mathbb{N}$.
\end{lemma}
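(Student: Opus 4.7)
The strategy is to pin down $\phi(r^n)$ by applying (P1) in two different ways and intersecting the resulting constraints. Notably, neither the continuity hypothesis (P3) nor the total disconnectedness of spectra is needed here: this lemma is a purely algebraic consequence of (P1), (P2), the assumption $\phi(r)=1$, and the polynomial spectral mapping theorem.

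First I would invoke spectral mapping applied to the polynomials $z\mapsto z^n$ and $z\mapsto z^{n+1}$ to record
$$\sigma(r^n)=\{1,k^n\}\qquad\text{and}\qquad\sigma(r^{n+1})=\{1,k^{n+1}\}.$$
The inclusion \eqref{inspec} (which is an immediate consequence of (P1) and (P2)) then gives the first constraint $\phi(r^n)\in\{1,k^n\}$. For the second constraint, applying (P1) to the pair $(r,r^n)$ and using $\phi(r)=1$ yields
$$\phi(r^n)=\phi(r)\phi(r^n)\in\sigma(r^{n+1})=\{1,k^{n+1}\}.$$
Hence $\phi(r^n)\in\{1,k^n\}\cap\{1,k^{n+1}\}$.

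The final step is to observe that this intersection equals $\{1\}$. If one supposed $\phi(r^n)\neq 1$, then $\phi(r^n)=k^n=k^{n+1}$ would follow, forcing $k^n(k-1)=0$; since $k\neq 0$ this gives $k=1$, contradicting the hypothesis. Therefore $\phi(r^n)=1$, as required. (The incidental case $k^n=1$ for some $n$, which could occur if $k$ is a root of unity, causes no trouble: the intersection still contains $1$ and the argument above still rules out any other value.)

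I do not anticipate a real obstacle. The only point requiring a moment's attention is verifying the intersection collapses to $\{1\}$, and this is precisely where the standing assumptions $k\neq 0$ and $k\neq 1$ are used, through the implication $k^n=k^{n+1}\Rightarrow k\in\{0,1\}$.
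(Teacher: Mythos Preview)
Your argument is correct and is essentially the same as the paper's: both assume $\phi(r^n)\neq 1$, use the Spectral Mapping Theorem to force $\phi(r^n)=k^n$, and then apply (P1) with $\phi(r)=1$ to place $k^n$ in $\sigma(r^{n+1})=\{1,k^{n+1}\}$, yielding the contradiction $k^n=1$ or $k^n=k^{n+1}$. The only difference is cosmetic---you phrase it as an intersection of two constraint sets, while the paper runs the contradiction directly.
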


\begin{proof}
	Suppose, for the sake of a contradiction, that $\phi\left(r^{n}\right) \neq 1$ for some $n \in \mathbb{N}$. By the Spectral Mapping Theorem, $\sigma\left(r^{m}\right) = \left\{1, k^{m} \right\}$ for all $m \in \mathbb{N}$. Hence, $\phi\left(r^{n}\right) = k^{n}$. However, since
	$$k^{n} = \phi\left(r^{n}\right) = \phi\left(r^{n}\right)\phi(r) \in \sigma\left(r^{n+1}\right),$$
	we obtain $k^{n} = k^{n+1}$ or $k^{n} = 1$ which gives a contradiction. 
\end{proof}

\begin{lemma}\label{r4}
	$\phi$ satisfies:
		\begin{itemize}
			\item[\textnormal{(i)}]
 For each $x \in G(A)$, $\phi\left(x^{n}\right) = \phi(x)^{n}$ for all $n \in \mathbb{N}$.
			\item[\textnormal{(ii)}]
 For each $x \in A$, $\phi \left(e^{x}\right) = e^{\phi(x)}$.
		\end{itemize}

\end{lemma}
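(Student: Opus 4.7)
The plan is to prove (i) first and derive (ii) from (i) by passing to the limit in the approximation $(\mathbf{1}+x/N)^N\to e^x$. The backbone of the whole argument is a semigroup identity for $t\mapsto \phi(e^{ty})$: for every $y\in A$ and all $s,t\in\mathbb C$,
\begin{equation*}
\phi\!\left(e^{(t+s)y}\right)=\phi\!\left(e^{ty}\right)\phi\!\left(e^{sy}\right).
\end{equation*}

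To prove this identity, fix $c\in\mathbb C$ and consider $h(t):=\phi(e^{ty})\phi(e^{(c-t)y})$. Since $e^{ty}$ and $e^{(c-t)y}$ commute, (P1) places $h(t)\in\sigma(e^{cy})$, and by hypothesis this spectrum is totally disconnected. But $h$ is continuous on the connected set $\mathbb C$, so it must be constant; evaluating at $t=0$ forces $h(t)\equiv\phi(e^{cy})$, which is the identity with $s=c-t$.

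For (i), given $x\in G(A)$, the key step is to construct $y\in A$ with $e^y=x$ via the holomorphic functional calculus. Because $\sigma(x)$ is totally disconnected, compact and bounded away from $0$, I will partition it into finitely many clopen pieces of small diameter, enclose each piece in an open disk of $\mathbb C\setminus\{0\}$ (the disks pairwise disjoint), and paste together a holomorphic branch of $\log$ on each disk. The resulting $f$ is holomorphic on a neighbourhood of $\sigma(x)$ and satisfies $e^{f(z)}=z$ there, so $y:=f(x)\in A$ gives $e^y=x$. Iterating the semigroup identity then yields $\phi(x^n)=\phi(e^{ny})=\phi(e^y)^n=\phi(x)^n$.

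For (ii), given $x\in A$, one has $\mathbf{1}+x/N\in G(A)$ for $N$ large, so (i) combined with Lemma~\ref{r1} gives $\phi\bigl((\mathbf{1}+x/N)^N\bigr)=(1+\phi(x)/N)^N$. Sending $N\to\infty$, continuity of $\phi$ and the norm convergence $(\mathbf{1}+x/N)^N\to e^x$ send the left side to $\phi(e^x)$, while the right side tends to $e^{\phi(x)}$. The obstacle I expect to wrestle with is the functional-calculus step producing $y$ with $e^y=x$: a totally disconnected compact subset of $\mathbb C\setminus\{0\}$ need not admit a connected simply connected neighbourhood there, so a single global branch of $\log$ may fail to exist; the remedy is the disconnected neighbourhood built from small disjoint simply connected pieces described above, which is available precisely because the spectrum is totally disconnected.
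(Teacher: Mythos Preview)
Your proof is correct, and part (ii) is identical to the paper's. For part (i), however, your route is genuinely different and in fact more economical. The paper argues by contradiction: assuming $\phi(x^n)=\beta^n$ with $\beta\neq\alpha=\phi(x)$, it partitions $\sigma(x)$ into small clopen pieces, builds via the functional calculus an element $r$ with $\sigma(r)=\{1,k\}$ that is $1$ on the piece containing $\alpha$ and $k$ elsewhere, shows $\phi(r)=1$ (otherwise (P1) would force $k\alpha\in\sigma(x)$), invokes Lemma~\ref{r2} to get $\phi(r^m)=1$ for all $m$, and finally uses $\beta^n=\phi(r^m)\phi(x^n)\in\sigma(r^mx^n)$ together with $k^m\to\infty$ to contradict $0\notin\sigma(x^n)$. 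Your argument bypasses this machinery entirely: the observation that $t\mapsto\phi(e^{ty})\phi(e^{(c-t)y})$ is a continuous map from $\mathbb C$ into the totally disconnected set $\sigma(e^{cy})$, hence constant, gives the semigroup law in one stroke, and the existence of a logarithm $y$ with $e^y=x$ (which the totally disconnected spectrum makes available exactly as you describe) finishes (i). What you gain is brevity and a proof that never touches Lemma~\ref{r2}; what the paper's approach gains is that its functional-calculus input is only a locally constant function rather than a branch of the logarithm, so the analytic preparation is slightly lighter even though the overall argument is longer.
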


\begin{proof}
(i)	 Let $x \in G(A)$ and suppose that $\phi(x) = \alpha$. For the sake of a contradiction, assume that $\phi\left(x^{n}\right) \neq \alpha^{n}$ for some $n \in \mathbb{N}$. By the Spectral Mapping Theorem, and the fact that $\phi\left(x^{n}\right) \in \sigma \left(x^{n}\right)$, it follows that $\phi\left(x^{n}\right) = \beta^{n}$ for some $\beta \in \sigma(x)$ with $\alpha \neq \beta$. Let $\left|\beta^{n} - \alpha^{n}\right| =: \epsilon > 0$. By continuity of the polynomial $g(\lambda) = \lambda^{n}$, there exists a $\delta > 0$ such that $\left|\lambda - \alpha\right| <\delta$ implies $\left|\lambda^{n} - \alpha^{n}\right| < \epsilon$. Since $\sigma (x)$ is totally disconnected it is well-known that we can find disjoint open sets $U_{1}, \ldots, U_{m}$ in $\mathbb{C}$ such that each $U_{j}$ has diameter less than $\delta$ and $\sigma(x) \subseteq U:= U_{1} \cup \cdots \cup U_{l}$. Without loss of generality we may assume that $\alpha \in U_{1}$. Since $U_{1}$ has diameter less than $\delta$ it follows that $\beta \in U\setminus U_{1}$. Choose $k \in \mathbb{N}$ such that $k\alpha \notin \sigma(x)$ and note that $k \neq 1$. Now, let $f$ be the holomorphic function on $U$ defined by
	$$f(\lambda) = \left\{ \begin{array}{cl} 1 & \mbox{if }\lambda \in U_{1} \\ k & \mbox{if }\lambda \in U\setminus U_{1} \end{array} \right.$$ 
	By the Holomorphic Functional Calculus, it follows that $r:= f(x)$ has spectrum $\sigma(r) = \left\{1, k \right\}$. We claim that $\phi(r) = 1$: If $\phi(r) \neq 1$, then $\phi(r) = k$. However, then, by (P1) and the Holomorphic Functional Calculus, we obtain
	$$k\alpha = \phi(r)\phi(x) \in \sigma(rx) = \left\{\lambda: \lambda \in \sigma(x)\cap U_{1} \right\}  \cup \left\{k\lambda: \lambda \in \left(\sigma(x)\cap U\right)\setminus U_{1} \right\}.$$  
	But this yields a contradiction since $k\alpha \notin \sigma(x)$ and the $U_{j}$ are all disjoint. We may therefore conclude that $\phi(r) = 1$ as claimed. By Lemma \ref{r2} it therefore follows that $\phi\left(r^{m}\right) = 1$ for all $m \in \mathbb{N}$. Now, by the Holomorphic Functional Calculus, we have for each $m \in \mathbb{N}$ that
	\begin{equation}
	\sigma\left(r^{m}x^{n}\right) = \left\{\lambda^{n}: \lambda \in \sigma(x)\cap U_{1} \right\}  \cup \left\{k^{m}\lambda^{n}: \lambda \in \left(\sigma(x)\cap U\right)\setminus U_{1} \right\}.
	\label{eq1}
	\end{equation}
	Moreover, by the (P1) property of $\phi$, we get
	$$\beta^{n} = \phi\left(x^{n}\right) = \phi\left(r^{m}\right)\phi\left(x^{n}\right) \in \sigma\left(r^{m}x^{n}\right)$$
	for each $m \in \mathbb{N}$. Since $\lambda \in \sigma(x)\cap U_{1}$ implies that $\left|\lambda^{n} - \alpha^{n}\right| < \epsilon$, it must be the case that for each $m \in \mathbb{N}$, $\beta^{n}$ belongs to the second set in the union in (\ref{eq1}). Consequently, for each $m \in \mathbb{N}$ there exists a $\lambda_{m} \in \sigma(x)$ such that $\beta^{n} = k^{m}\lambda_{m}^{n}$. But then
	$$\lim_{m \rightarrow \infty} \left|\lambda_{m}\right|^{n} = \lim_{m \rightarrow \infty} {k^{m}\left|\lambda_{m}\right|^{n}}/{k^{m}} = \lim_{m \rightarrow \infty} {\left|\beta\right|^{n}}/{k^{m}} = 0 \mbox{ since }k>1,$$
	and so, $0 \in \sigma(x^{n})$. But this is absurd since $x \in G(A)$. We therefore conclude that, for each $x \in G(A)$, $\phi\left(x^{n}\right) = \phi(x)^{n}$ for all $n \in \mathbb{N}$. \\
(ii) 	Let $x \in A$. Then $\mathbf{1} + x/n \in G(A)$ for all sufficiently large $n \in \mathbb{N}$, say $n \geq m$. In particular, by hypothesis and Lemma~\ref{r1}, this means that if $n\geq m$
$$\phi \left( \left(\mathbf{1} + {x}/{n}\right)^{n}\right) = \phi\left(\mathbf{1} + {x}/{n}\right)^{n} = \left(1 + {\phi(x)}/{n}\right)^{n}.$$
By (P3) we therefore obtain that
$$\phi \left( \lim_{n \rightarrow \infty} \left(\mathbf{1} + {x}/{n}\right)^{n}\right) = \lim_{n \rightarrow \infty} \left(1 + {\phi(x)}/{n}\right)^{n}.$$
Hence, $\phi \left(e^{x}\right) = e^{\phi(x)}$ as desired.	
\end{proof}

\begin{theorem}\label{r5}
 $\phi$ is a character of $A$. 
\end{theorem}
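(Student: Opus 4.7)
The strategy is to invoke the (economical form of the) Kowalski-S\l{}odkowski theorem recalled in the remark after Theorem~\ref{allfunctions}: it suffices to verify that
\[
\phi(x)+\phi(y)\in\sigma(x+y)\qquad\text{for all } x,y\in A.
\]
Once that is in place, $\phi$ is automatically a character. Note also that setting $x=y=0$ in this condition forces $\phi(0)=0$, so nothing else is required.

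To establish the additive spectral inclusion, the plan is to first prove a ``modulo $2\pi i$'' version via the exponential map and then remove the ambiguity by scaling. Fix $x,y\in A$ and consider the element $e^{x/n}e^{y/n}\in G(A)$. By (P1) together with Lemma~\ref{r4}(ii) and Lemma~\ref{r1}(ii),
\[
e^{(\phi(x)+\phi(y))/n} \;=\; \phi\bigl(e^{x/n}\bigr)\,\phi\bigl(e^{y/n}\bigr) \;\in\; \sigma\bigl(e^{x/n}e^{y/n}\bigr).
\]
Taking $n$-th powers and using the Spectral Mapping Theorem gives $e^{\phi(x)+\phi(y)}\in\sigma\bigl((e^{x/n}e^{y/n})^n\bigr)$ for every $n$. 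Now apply the Lie-Trotter formula (Theorem~\ref{LieTrot}) so that $(e^{x/n}e^{y/n})^n\to e^{x+y}$, and invoke upper semicontinuity of the spectrum (the standard fact that if $a_n\to a$ in $A$ and $\lambda\in\sigma(a_n)$ for all $n$, then $\lambda\in\sigma(a)$) applied to the constant sequence $\lambda=e^{\phi(x)+\phi(y)}$. This yields
\[
e^{\phi(x)+\phi(y)}\in\sigma\bigl(e^{x+y}\bigr)=e^{\sigma(x+y)},
\]
the last equality by the Spectral Mapping Theorem. Hence there exist $\mu\in\sigma(x+y)$ and $k\in\mathbb{Z}$ (both possibly depending on $x,y$) such that $\phi(x)+\phi(y)=\mu+2\pi i k$.

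To eliminate the integer $k$, apply the argument above with $x/m,\, y/m$ in place of $x,\,y$ for each $m\in\mathbb{N}$. Using Lemma~\ref{r1}(ii) and the Spectral Mapping Theorem to scale spectra, we obtain integers $k_m$ with
\[
\phi(x)+\phi(y)+2\pi i m k_m\in\sigma(x+y).
\]
Since $\sigma(x+y)$ is compact (hence bounded), the sequence $\{2\pi i m k_m\}$ is bounded, which forces $k_m=0$ for all sufficiently large $m$. Therefore $\phi(x)+\phi(y)\in\sigma(x+y)$, completing the verification of the Kowalski-S\l{}odkowski criterion and hence the theorem.

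The main obstacle is precisely the non-injectivity of the exponential: the argument through Lie-Trotter and spectral mapping only directly yields equality modulo $2\pi i\mathbb{Z}$, and the scaling trick relying on the homogeneity lemma~\ref{r1}(ii) together with compactness of the spectrum is what rescues an exact inclusion. Everything else (continuity, exponential identity, $n$-th power identity for invertibles) has already been packaged into the preceding lemmas.
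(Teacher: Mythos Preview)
Your proof is correct and follows essentially the same route as the paper: derive $e^{\phi(x)+\phi(y)}\in\sigma(e^{x+y})$ via Lemma~\ref{r4}, the Spectral Mapping Theorem, and Lie--Trotter, then rescale by $1/m$ to kill the $2\pi i\mathbb{Z}$ ambiguity and apply Kowalski--S\l{}odkowski. The only cosmetic differences are that the paper invokes full continuity of the spectrum (available from the totally disconnected hypothesis) where you use the more elementary fact that $A\setminus G(A)$ is closed, and the paper phrases the rescaling step via injectivity of $\exp$ on the fundamental strip rather than via boundedness of $\sigma(x+y)$; both variants are equivalent.
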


\begin{proof}
	Let $x,y\in A$ be arbitrary, and let $n$ be a natural number. Using (P3), the Spectral Mapping Theorem, and Lemma~\ref{r4} we have 
	\begin{align*}
	\phi\left( e^{{x}/{n} }\right)\phi\left(e^{{y}/{n}}\right)\in\sigma\left(e^{{x}/{n}}e^{{y}/{n}}\right)&
	\Rightarrow\phi\left(e^{{x}/{n} }\right)^n\phi\left(e^{{y}/{n}}\right)^n \in \sigma\left(\left[e^{{x}/{n}}e^{{y}/{n}}\right]^n\right)\\&\Rightarrow\phi\left(e^x\right)\phi\left(e^y\right) \in \sigma\left(\left[e^{{x}/{n}}e^{{y}/{n}}\right]^n\right)\\&\Rightarrow
	e^{\phi(x)+\phi(y)}\in \sigma\left(\left[e^{{x}/{n}}e^{{y}/{n}}\right]^n\right)
	\end{align*}	
 Since $\sigma(a)$ is totally disconnected for each $a\in A$ the map $a\mapsto\sigma(a)$ is continuous at each $a\in A$, and so, by the Lie-Trotter Formula, we have
 \begin{equation}\label{chartd}
 e^{\phi(x)+\phi(y)}\in \sigma\left(e^{x+y}\right).
 \end{equation}
	 Let $m$ be a sufficiently large natural number such that the set ${\sigma((x+y)/m)}$ and the complex number ${(\phi(x)+\phi(y))}/{m}$ are both in the fundamental strip $\{\lambda\in\mathbb C:-\pi<\Ima(\lambda)\leq\pi\}$. Then, since the exponential function is injective on the strip, it follows from \eqref{chartd} with $x,y$ replaced by $x/m,y/m$ that  ${(\phi(x)+\phi(y))}/{m}\in{\sigma(x+y)}/{m}.$ This   implies that $\phi(x)+\phi(y) \in \sigma(x+y),$ and so, by the Kowalski-S\l{}odkowski Theorem, $\phi$ is a character. 
	
\end{proof}

In particular if $A$ is finite dimensional and $\phi:A\rightarrow\mathbb C$ satisfies (P1)--(P3), then $\phi$ is a character. 

\section{$C^\star$-algebras}
Throughout this section $A$ will denote a complex and unital $C^\star$-algebra, and $\mathcal S$ will denote the (real) Banach space of self-adjoint elements of $A$.
If $x\in A$ then we denote $$\Rea(x):=(x+x^\star)/2\mbox{ and }\Ima(x):=(x-x^\star)/2i.$$ As before $\phi:A\rightarrow\mathbb C$ is a map which satisfies the properties (P1)--(P3) in Section~\ref{intro}.
 We shall, from now on, also use \eqref{inspec} as well as the Spectral Mapping Theorem without further specific reference. Passing through a sequence of lemmas we derive the main result, Theorem~\ref{Cstar}

\begin{lemma}\label{lem31}
 Let $x \in\mathcal{S}$. If  $\phi(x) \neq 0$, then:
	\begin{itemize}
		\item[(i)] $\phi(\mathbf 1+ix)=1+i\phi(x),$
		\item[(ii)]$\phi(tx)=t\phi(x),$ for each $t\in\mathbb R,$
		\item[(iii)] $\phi(e^{tx})=e^{\phi(tx)}=e^{t\phi(x)},$ for each $t\in\mathbb R,$
		\item[(iv)] $\phi(x^n)=\phi(x)^n,$ for each $n\in\mathbb N$.
	\end{itemize}
\end{lemma}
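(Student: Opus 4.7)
My plan is to prove the four assertions in order, each time exploiting (P1) against a carefully chosen ``test element'' of the form $\mathbf{1}+isx$ and then separating real and imaginary parts; throughout I write $\alpha:=\phi(x)$, which is real (since $\sigma(x)\subseteq\mathbb{R}$) and nonzero by hypothesis. For (i), $\mathbf{1}+ix$ has spectrum $1+i\sigma(x)\subseteq 1+i\mathbb{R}$, hence lies in $G(A)$, so $\phi(\mathbf{1}+ix)=1+i\mu$ for some $\mu\in\sigma(x)$. Applying (P1) to the pair $(x,\mathbf{1}+ix)$ and using the Spectral Mapping Theorem to write $\sigma(x+ix^2)=\{\lambda+i\lambda^2:\lambda\in\sigma(x)\}$ gives $\alpha+i\alpha\mu=\lambda+i\lambda^2$ for some $\lambda\in\sigma(x)$; separating real and imaginary parts yields $\lambda=\alpha$ and then $\mu=\alpha$ (using $\alpha\neq 0$).

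For (ii), I first repeat the calculation of (i) with $\mathbf{1}+itx$ in place of $\mathbf{1}+ix$: for each $t\in\mathbb{R}$ this yields the stronger identity $\phi(\mathbf{1}+itx)=1+it\alpha$. Since $\mathbf{1}+itx\in G(A)$, \eqref{obs2} then gives $\phi\bigl(c(\mathbf{1}+itx)\bigr)=c(1+it\alpha)$ for every $c\in\mathbb{C}$. Choosing $c=ir$ with $r\in\mathbb{R}\setminus\{0\}$ and setting $s:=-rt$, this becomes $\phi(sx+ir\mathbf{1})=s\alpha+ir$, valid for every $s\in\mathbb{R}$ and every $r\neq 0$. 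Letting $r\to 0$ and invoking continuity of $\phi$ (property (P3)) delivers $\phi(sx)=s\alpha$ for every $s\in\mathbb{R}$.

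For (iii), I apply (P1) to the commuting pair $(e^{tx},\mathbf{1}+isx)$. Since $\sigma(e^{tx})\subseteq(0,\infty)$ one may write $\phi(e^{tx})=e^{t\nu}$ with $\nu\in\sigma(x)$, while the functional calculus gives $\sigma\bigl(e^{tx}(\mathbf{1}+isx)\bigr)=\{e^{t\lambda}(1+is\lambda):\lambda\in\sigma(x)\}$. Hence $e^{t\nu}(1+is\alpha)=e^{t\lambda}(1+is\lambda)$ for some $\lambda\in\sigma(x)$; the real part forces $\nu=\lambda$ and the imaginary part (with $s\neq 0$) forces $\alpha=\nu$, so $\phi(e^{tx})=e^{t\alpha}$, which combined with (ii) delivers the full chain $\phi(e^{tx})=e^{\phi(tx)}=e^{t\phi(x)}$.

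For (iv), I use a perturbation argument to sidestep the degenerate branch $\phi(x^n)=0$ that the real/imaginary separation otherwise leaves open. Set $z_a:=a\mathbf{1}+x^n$ for $a\in\mathbb{R}$; since $z_a$ and $\mathbf{1}+isx$ commute, the functional calculus gives $\sigma\bigl(z_a(\mathbf{1}+isx)\bigr)=\{(a+\lambda^n)(1+is\lambda):\lambda\in\sigma(x)\}$. Applying (P1) to $(z_a,\mathbf{1}+isx)$ and separating real and imaginary parts shows $\phi(z_a)\in\{0,\,a+\alpha^n\}$, with the value $0$ possible only if $-a\in\sigma(x)^n$; since the latter is a compact subset of $\mathbb{R}$, its complement is open and dense, and continuity of $a\mapsto\phi(z_a)$ extends $\phi(z_a)=a+\alpha^n$ to every $a\in\mathbb{R}$, so setting $a=0$ gives $\phi(x^n)=\alpha^n$. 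The main obstacle throughout (ii)--(iv) is eliminating the spurious ``$\phi=0$'' branch that the real/imaginary separation otherwise leaves open: in (ii) this is done by the complex-scaling trick via \eqref{obs2}, in (iii) by positivity of $\sigma(e^{tx})$, and in (iv) by the $a$-perturbation combined with continuity of $\phi$.
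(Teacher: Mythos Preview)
Your proof is correct and follows the same underlying strategy as the paper: apply (P1) to $x$ (or a function of $x$) against a test element whose spectrum sits on a line of the form $a+ib\,\mathbb{R}$, then separate real and imaginary parts, and invoke continuity to close the remaining cases. Part (i) is identical to the paper's argument.

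The execution differs in a few interesting places. For (ii) the paper works directly with $\phi(x)\phi(\alpha i\mathbf{1}+tx)\in\sigma(\alpha ix+tx^2)$ and lets $\alpha\to 0$; you instead first upgrade (i) to $\phi(\mathbf{1}+itx)=1+it\alpha$ for all real $t$, then multiply through by a complex scalar using \eqref{obs2} and let the imaginary shift tend to zero --- a clean trick that avoids computing $\sigma(\alpha ix+tx^2)$ explicitly. For (iv) the paper perturbs $x^n$ by $\alpha i\mathbf{1}$ and tests against $x$, whereas you perturb by a real shift $a\mathbf{1}$, test against $\mathbf{1}+isx$, and use density of $\{a:-a\notin\sigma(x)^n\}$ together with continuity; both are continuity arguments, yours trading a slightly longer set-theoretic step for a more uniform use of the single test element $\mathbf{1}+isx$ throughout. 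These are variations in taste rather than in substance.
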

\begin{proof} (i) From (P1) we get that $\phi(x)\phi(\mathbf 1+ix)\in\sigma(x+ix^2)$. Writing $\phi(\mathbf 1+ix)=1+i\alpha$ where $\alpha\in\sigma(x)$ it follows that $\phi(x)(1+i\alpha)=\beta+i\beta^2$ for some $\beta\in\sigma(x)$. Since $\sigma(x)\subset\mathbb R$ we infer that $\phi(x)=\alpha=\beta$, and hence that $\phi(\mathbf 1+ix)=1+i\phi(x)$.\\
(ii) Fix $t\in\mathbb R$, and let $\alpha>0$ (which we will regard as a variable). By (P1) 
$$\phi(x)\phi(\alpha i\mathbf 1+tx)\in\sigma\left(\alpha ix+tx^2\right),$$ from which we can write
\begin{equation}\label{f1}
\phi(x)\phi(\alpha i\mathbf 1+tx)=\beta_\alpha\alpha i+t\beta^2_\alpha\mbox{ with }\beta_\alpha\in\sigma(x).
\end{equation}
But we can also write $\phi(\alpha i\mathbf 1+tx)=\alpha i+t\lambda_\alpha$ so that
\begin{equation}\label{f2}
\phi(x)\phi(\alpha i\mathbf 1+tx)=\phi(x)\alpha i+t\phi(x)\lambda_\alpha\mbox{ with }\lambda_\alpha\in\sigma(x).
\end{equation}
Comparing the real and imaginary parts of \eqref{f1} and \eqref{f2} on the right, using the fact that $\phi(x)\not=0$, $\alpha\not=0$, and $\alpha,t,\phi(x),\beta_\alpha,\lambda_\alpha\in\mathbb R$ it follows that
$\phi(x)=\beta_\alpha=\lambda_\alpha$ for all $\alpha>0$. So, for all $\alpha>0$, we have 
$$\phi(\alpha i\mathbf 1+tx)=\alpha i+t\phi(x).$$
If we let $\alpha\rightarrow0$ then (P3) implies $\phi(tx)=t\phi(x).$
	
	(iii) By (ii) it suffices to show that $\phi(e^x)=e^{\phi(x)}.$  Consider $$\phi(e^x)\phi(\mathbf 1+ix)\in \sigma(e^x+ixe^x).$$ Then, using (i), $\phi(e^x)+i\phi(e^x)\phi(x)=e^\gamma+e^\gamma\gamma i$ for some $\gamma \in \sigma(x)$. Consequently $\phi(e^x)=e^\gamma$ and $\phi(x)=\gamma$ which implies $\phi(e^x)=e^{\phi(x)}.$\\
	(iv) Let $\alpha>0$ be a variable. Then, from (P1), $\phi(x)\phi\left(x^n+\alpha i \mathbf 1\right)=\beta_\alpha^{n+1}+\alpha i \beta_\alpha $, where $\beta_\alpha\in\sigma(x)$. On the other hand, we can also write $\phi(x^n+\alpha i\mathbf 1)=\lambda_\alpha^n+\alpha i$ where $\lambda_\alpha\in\sigma(x)$. Thus $$\phi(x)\lambda_\alpha^n+\alpha i\phi(x)=\beta_\alpha^{n+1}+\alpha i \beta_\alpha ,$$
	which implies that $\phi(x)=\beta_\alpha$ and $\lambda_\alpha^n=\phi(x)^n $. Therefore 
	$$\phi(x^n+\alpha i)=\phi(x)^n+\alpha i,$$ and the result follows from (P3) by letting $\alpha\to0$.
	
\end{proof}

\begin{lemma}\label{positive}
 If $u\in\mathcal S$ is positive then:
\begin{enumerate}
	\item[(i)] $\phi(e^u)=e^{\phi(u)}$,
	\item[(ii)] $\phi(u^n)=\phi(u)^n$ for each $n\in\mathbb N$.
\end{enumerate}
\end{lemma}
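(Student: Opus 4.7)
The plan is to reduce both claims to Lemma~\ref{lem31} by perturbing $u$ to a strictly positive element and then letting the perturbation shrink to zero, using the continuity (P3). The reason such a perturbation is needed is that Lemma~\ref{lem31} requires $\phi(x)\neq 0$, while for a positive $u\in\mathcal S$ all we know a priori is $\phi(u)\in\sigma(u)\subseteq[0,\infty)$, which permits $\phi(u)=0$.

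Concretely, fix $\epsilon>0$ and set $u_\epsilon:=u+\epsilon\mathbf 1$. Since $u$ is positive and self-adjoint, $u_\epsilon\in\mathcal S$ with $\sigma(u_\epsilon)\subseteq[\epsilon,\infty)$. Hence $\phi(u_\epsilon)\in\sigma(u_\epsilon)\subseteq[\epsilon,\infty)$, so in particular $\phi(u_\epsilon)\geq\epsilon>0$. This is exactly the hypothesis required to invoke Lemma~\ref{lem31} with $x=u_\epsilon$.

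For (i): applying Lemma~\ref{lem31}(iii) (with $t=1$) gives $\phi(e^{u_\epsilon})=e^{\phi(u_\epsilon)}$. Now let $\epsilon\to 0^+$. By continuity of the exponential in $A$, $e^{u_\epsilon}\to e^u$, so continuity of $\phi$ (property (P3)) yields $\phi(e^{u_\epsilon})\to\phi(e^u)$; similarly $\phi(u_\epsilon)\to\phi(u)$, and continuity of the scalar exponential gives $e^{\phi(u_\epsilon)}\to e^{\phi(u)}$. Passing to the limit produces $\phi(e^u)=e^{\phi(u)}$. For (ii): applying Lemma~\ref{lem31}(iv) gives $\phi(u_\epsilon^n)=\phi(u_\epsilon)^n$ for each $n\in\mathbb N$. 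Since $u_\epsilon^n\to u^n$ in norm as $\epsilon\to 0^+$, the same continuity argument yields $\phi(u^n)=\phi(u)^n$.

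The only step requiring any care is the verification that $\phi(u_\epsilon)\neq 0$, which is the crux of the argument and the reason the hypothesis of positivity on $u$ cannot be dispensed with: it is precisely positivity that forces $\sigma(u_\epsilon)\subseteq[\epsilon,\infty)$ and hence $\phi(u_\epsilon)\geq\epsilon>0$. Everything else is a routine continuity passage from Lemma~\ref{lem31}.
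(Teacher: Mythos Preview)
Your proof is correct and follows essentially the same approach as the paper: perturb $u$ to $u+\alpha\mathbf 1$ (with $\alpha>0$) so that Lemma~\ref{lem31} applies, and then pass to the limit $\alpha\to 0$ using (P3). You spell out the justification that $\phi(u+\alpha\mathbf 1)\neq 0$ via $\sigma(u+\alpha\mathbf 1)\subseteq[\alpha,\infty)$ a bit more explicitly than the paper does, but the argument is the same.
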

\begin{proof} (i) Let $\alpha >0$. Then $\phi(u+\alpha \mathbf 1)\neq 0$ so that Lemma~\ref{lem31} gives $\phi(e^{u+\alpha \mathbf 1})=e^{\phi(u+\alpha \mathbf 1)}$. Taking the limit as $\alpha\rightarrow0$, using (P3), we obtain $\phi(e^u)=e^{\phi(u)}$.\\
(ii) Let $\alpha>0$. Then $\phi(u+\alpha\mathbf 1)\neq0$ whence $\phi\left((u+\alpha\mathbf 1)^n\right)=\phi(u+\alpha\mathbf 1)^n$. So taking the limit as $\alpha\rightarrow0$ we get $\phi(u^n)=\phi(u)^n.$	
\end{proof}

\begin{lemma}\label{zerocase}
Let $x\in\mathcal S$ and let $0\neq t\in\Bbb R$. Then $\phi(x)=0$ if and only if $\phi(tx)=0$.	
\end{lemma}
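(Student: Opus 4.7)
The plan is to derive both directions of the biconditional directly from Lemma~\ref{lem31}(ii), which asserts the homogeneity $\phi(tx) = t\phi(x)$ whenever $x \in \mathcal S$ satisfies $\phi(x) \neq 0$. Because the statement we must prove is symmetric in $x$ and $tx$ under the replacement $t \mapsto 1/t$, both implications will fall out of the same lemma applied twice. The argument is most cleanly phrased as two contrapositives.

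First I would handle the forward direction: suppose $\phi(x) \neq 0$. Since $x \in \mathcal S$, Lemma~\ref{lem31}(ii) yields $\phi(tx) = t\phi(x)$, and since $t \neq 0$ and $\phi(x) \neq 0$, this product is nonzero. Hence $\phi(tx) \neq 0$, which is exactly the contrapositive of the implication $\phi(tx) = 0 \Rightarrow \phi(x) = 0$.

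For the reverse direction, suppose instead that $\phi(tx) \neq 0$. Observe that $tx \in \mathcal S$ because $t$ is real and $x$ is self-adjoint. Applying Lemma~\ref{lem31}(ii) to $y := tx \in \mathcal S$ with scalar $s := 1/t \in \mathbb R$, and using the hypothesis $\phi(y) \neq 0$, we obtain $\phi(sy) = s\phi(y)$; that is,
\[
\phi(x) \;=\; \phi\bigl((1/t)(tx)\bigr) \;=\; (1/t)\,\phi(tx) \;\neq\; 0.
\]
This is the contrapositive of the implication $\phi(x) = 0 \Rightarrow \phi(tx) = 0$, completing the proof. There is essentially no obstacle; the lemma is just a symmetry observation about the homogeneity already established in Lemma~\ref{lem31}(ii), with the only point worth noting being that $tx$ stays in $\mathcal S$ so that Lemma~\ref{lem31}(ii) is legitimately applicable in the second step.
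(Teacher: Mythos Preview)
Your proof is correct, and it is noticeably more economical than the paper's own argument. You simply observe that Lemma~\ref{lem31}(ii) already gives the homogeneity $\phi(tx)=t\phi(x)$ under the hypothesis $\phi(x)\neq 0$, and then exploit the symmetry $x\leftrightarrow tx$, $t\leftrightarrow 1/t$ (noting that $tx\in\mathcal S$ since $t$ is real) to obtain both contrapositives at once.

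The paper, by contrast, does not invoke Lemma~\ref{lem31}(ii). Instead, assuming $\phi(x)=0$ and $\phi(tx)\neq 0$, it returns to first principles: it applies (P1) to the pair $\bigl(tx,\ \alpha i\mathbf 1+x\bigr)$, writes the resulting element of $\sigma(\alpha itx+tx^2)$ in two ways, compares imaginary parts to extract $t\beta_\alpha=\phi(tx)$, and then lets $\alpha\to 0$ to force $\beta_\alpha\to 0$ and hence $\phi(tx)=0$, a contradiction. In effect the paper reproduces the computation behind Lemma~\ref{lem31}(ii) from scratch in this special setting, whereas you recognize that the lemma already does all the work. Your route is shorter and loses nothing; the paper's route is self-contained at this point but redundant given what has already been established.
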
	
\begin{proof}
Suppose $\phi(x)=0$ but $\phi(tx)\not=0$. Let $\alpha>0$. Notice first that $\phi(\alpha i\mathbf 1+x)\not=0$ since 
$\alpha i\mathbf 1+x\in G(A)$. Arguing as before we have
\begin{equation}\label{z1}
\phi(tx)\phi(\alpha i\mathbf 1+x)=\alpha t\beta_\alpha i+t\beta^2_\alpha\mbox{ where }0\neq\beta_\alpha\in\sigma(x).
\end{equation}
Writing $\phi(\alpha i\mathbf 1+x)=\alpha i+\lambda_\alpha$ we also have 
\begin{equation}\label{z2}
\phi(tx)\phi(\alpha i\mathbf 1+x)=\phi(tx)\alpha i+\phi(tx)\lambda_\alpha\mbox{ where }\lambda_\alpha\in\sigma(x).
\end{equation}
Comparing the imaginary parts of \eqref{z1} and \eqref{z2} we see that $t\beta_\alpha=\phi(tx)$ for each $\alpha$. 
If we let $\alpha\to0$, then, since $\phi(x)=0$, we observe from \eqref{z1} that $\beta^2_\alpha\to0$ whence $\beta_\alpha\to0$. But this means that $\phi(tx)=0$, contradicting the assumption. For the reverse implication, if $\phi(tx)=0$, then the preceding argument implies that
$\phi(x)=\phi(tx/t)=0.$
\end{proof}

\begin{lemma}\label{square}
Let $x\in \mathcal{S}$. If $\phi(x)=0$, then $\phi\left(x^2\right)=0$.
\end{lemma}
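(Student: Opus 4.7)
My plan is to run a contradiction argument in the same style as Lemma~\ref{zerocase}: assume $\phi(x^2)\neq 0$ and extract relations from (P1) applied to a product involving both $x^2$ and the invertible perturbation $\alpha i\mathbf 1+x$ with $\alpha>0$. The key computational input is that $x^2$ is positive so $\phi(x^2)\in\sigma(x^2)\subseteq[0,\infty)$; in particular $\phi(x^2)$ is real, which will make a real/imaginary-part comparison work cleanly.

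First I would compute the spectrum $\sigma\bigl(x^2(\alpha i\mathbf 1+x)\bigr)=\{\alpha i\lambda^{2}+\lambda^{3}:\lambda\in\sigma(x)\}$ by the Spectral Mapping Theorem, so that (P1) yields
\[
\phi(x^{2})\,\phi(\alpha i\mathbf 1+x)=\alpha i\beta_{\alpha}^{2}+\beta_{\alpha}^{3}
\]
for some $\beta_{\alpha}\in\sigma(x)$. Independently, \eqref{inspec} forces $\phi(\alpha i\mathbf 1+x)=\alpha i+\lambda_{\alpha}$ for some $\lambda_{\alpha}\in\sigma(x)\subseteq\mathbb R$. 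Expanding the left side as $\alpha i\,\phi(x^{2})+\phi(x^{2})\lambda_{\alpha}$ and comparing imaginary parts (using that $\phi(x^{2})$, $\lambda_{\alpha}$, $\beta_{\alpha}$ are all real) I obtain $\phi(x^{2})=\beta_{\alpha}^{2}$, and then comparing real parts together with the assumption $\phi(x^2)\neq 0$ gives $\lambda_{\alpha}=\beta_{\alpha}$.

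Finally I would send $\alpha\to 0^{+}$. By (P3), $\phi(\alpha i\mathbf 1+x)\to\phi(x)=0$, i.e.\ $\alpha i+\beta_{\alpha}\to 0$, so $\beta_{\alpha}\to 0$. Since $\phi(x^{2})=\beta_{\alpha}^{2}$ was independent of $\alpha$, this forces $\phi(x^{2})=0$, contradicting the hypothesis $\phi(x^{2})\neq 0$. I do not expect any serious obstacle: the only subtle point is to justify that real/imaginary parts can be separated, which works because $x$ is self-adjoint (so $\sigma(x)\subset\mathbb R$) and $x^{2}$ is positive (so $\phi(x^{2})\in\mathbb R$); the rest is exactly the same pattern of "introduce an imaginary spectator $\alpha i\mathbf 1$, equate two expressions from (P1), let $\alpha\to 0$" that carried Lemmas~\ref{lem31} and \ref{zerocase}.
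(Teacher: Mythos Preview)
Your argument is correct and is essentially identical to the paper's own proof: both apply (P1) to the pair $x^{2}$ and $x+\alpha i\mathbf 1$, write the resulting identity in two ways, separate real and imaginary parts (using $\sigma(x)\subset\mathbb R$ and $\phi(x^{2})\in\mathbb R$) to force $\phi(x^{2})=\beta_\alpha^{2}$ and $\lambda_\alpha=\beta_\alpha$, and then let $\alpha\to 0$ via (P3) to derive $\beta_\alpha\to 0$, contradicting $\phi(x^{2})\neq 0$. The only microscopic difference is notational (the order in which the terms $\alpha i\mathbf 1$ and $x$ are written); the mechanism, including the use of $\phi(x^{2})\neq 0$ to cancel $\beta_\alpha^{2}$ in the real-part equation, matches the paper exactly.
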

\begin{proof}
	Suppose to the contrary that $\phi(x^2)\neq 0$. Let $\alpha>0$. As before
	\begin{equation}\label{c1}
	 \phi\left(x^2\right)\phi\left(x+\alpha i\mathbf1\right)=\beta^3_\alpha+\alpha\beta^2_\alpha i,
	 \end{equation}
	  where $\beta_\alpha\in\sigma(x)$ for each $\alpha$. If we write $\phi\left(x+\alpha i\mathbf 1\right)=\lambda_\alpha+\alpha i$ then it follows that
	 \begin{equation}\label{c2}
	  \phi\left(x^2\right)\phi\left(x+\alpha i\mathbf1\right)=\lambda_\alpha\phi\left(x^2\right)+\alpha\phi\left(x^2\right)i,
	  \end{equation}
	   where $\lambda_\alpha\in\sigma(x)$ for each $\alpha$. Comparing \eqref{c1} and \eqref{c2}, using $\phi\left(x^2\right)\not=0$, we obtain $\phi\left(x^2\right)=\beta^2_\alpha$  and $\lambda_\alpha=\beta_\alpha$ for each $\alpha$. With (P3) we have that
	   $$\lim_{\alpha\rightarrow0}\lambda_\alpha=\lim_{\alpha\rightarrow0}\phi\left(x+\alpha i\mathbf 1\right)-\alpha i=\phi(x)=0.$$
	   	From the above relations it is then clear that $\phi\left(x^2\right)=0$ which contradicts the  assumption. 
\end{proof} 

\begin{lemma}\label{selfexp}
 Let $x\in \mathcal{S}$, and let $t\in\mathbb R$. Then $\phi\left(e^{tx}\right)=e^{\phi(tx)}=e^{t\phi(x)}$.
\end{lemma}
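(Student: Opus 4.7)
The strategy is to locate a single real constant $c \in \sigma(x)$ such that $\phi(e^{tx}) = e^{tc}$ for every $t \in \mathbb{R}$, and then to prove $c = \phi(x)$ via a limiting argument involving an \emph{imaginary} shift of $x$. This handles both possibilities $\phi(x) \neq 0$ and $\phi(x) = 0$ uniformly.

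To extract $c$, use that $e^{tx}$ is positive (since $x \in \mathcal{S}$ and $t \in \mathbb{R}$), so Lemma~\ref{positive}(ii) applied to $u = e^{tx}$ gives $\phi(e^{ntx}) = \phi(e^{tx})^n$ for every $n \in \mathbb{N}$. Since $\phi(e^{tx}) \in \sigma(e^{tx}) = e^{t\sigma(x)}$, we may write $\phi(e^{tx}) = e^{tc(t)}$ with $c(t) \in \sigma(x)$; the $n$-th power relation then forces $c(rt) = c(t)$ for every positive rational $r$. Combined with the continuity (P3) and the identity $\phi(e^{tx})\phi(e^{-tx}) \in \sigma(\mathbf{1}) = \{1\}$, this promotes $c$ to a single constant $c \in \sigma(x)$ independent of $t \neq 0$. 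Next, apply (P1) to $e^{tx}$ and $\mathbf{1} + i\beta x$ for $\beta > 0$: writing $\phi(\mathbf{1} + i\beta x) = 1 + i\beta\lambda_\beta$ with $\lambda_\beta \in \sigma(x)$, the (P1)-inclusion reads $e^{tc}(1 + i\beta\lambda_\beta) = e^{t\nu}(1 + i\beta\nu)$ for some $\nu \in \sigma(x)$. Matching real parts forces $\nu = c$, and the imaginary parts then force $\lambda_\beta = c$, so $\phi(\mathbf{1} + i\beta x) = 1 + i\beta c$ for every $\beta > 0$.

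The main obstacle is to show $c = \phi(x)$. The naive choice of a real shift $x + \alpha \mathbf{1}$ is frustrated whenever $-\alpha \in \sigma(x)$, so I would use the purely imaginary shift $\alpha i \mathbf{1} + x$, which is automatically invertible since $\sigma(x) \subseteq \mathbb{R}$. Write $\phi(\alpha i \mathbf{1} + x) = \alpha i + \lambda'_\alpha$ with $\lambda'_\alpha \in \sigma(x)$, and apply (P1) to $\alpha i \mathbf{1} + x$ and $\mathbf{1} + i\beta x$ at the special value $\beta = 1/\alpha$. A direct expansion gives $(\alpha i + \lambda'_\alpha)(1 + ic/\alpha) = (\lambda'_\alpha - c) + i(\alpha + c\lambda'_\alpha/\alpha)$, while $(\alpha i + \nu)(1 + i\nu/\alpha) = i(\alpha + \nu^2/\alpha)$; hence the vanishing of the real part on the right forces $\lambda'_\alpha = c$ (and $\nu = c \in \sigma(x)$ then satisfies the imaginary-part equation). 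Thus $\phi(\alpha i \mathbf{1} + x) = \alpha i + c$ for every $\alpha > 0$, and letting $\alpha \to 0^+$ via (P3) gives $\phi(x) = c$. Combined with $\phi(tx) = t\phi(x)$, which holds by Lemma~\ref{lem31}(ii) if $\phi(x) \neq 0$ and by Lemma~\ref{zerocase} if $\phi(x) = 0$, we conclude $\phi(e^{tx}) = e^{tc} = e^{t\phi(x)} = e^{\phi(tx)}$, as required.
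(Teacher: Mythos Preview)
Your argument is correct, and it follows a genuinely different route from the paper. The paper reduces at once to the case $\phi(x)=0$ via Lemma~\ref{lem31}(iii) and Lemma~\ref{zerocase}, and then shows $\phi(e^{tx})=1$ by pairing $e^{tx}$ and $e^{(tx)^2}$ with $\mathbf 1+ix$: this produces $\phi(e^{tx})=e^{t\lambda}$ and $\phi(e^{(tx)^2})=e^{(t\lambda)^2}$ for the \emph{same} $\lambda\in\sigma(x)$, and then Lemma~\ref{positive}(i) together with Lemma~\ref{square} force $(t\lambda)^2=0$. Your approach avoids Lemma~\ref{square} altogether. Instead you first pin down a single $c\in\sigma(x)$ with $\phi(e^{tx})=e^{tc}$ for all real $t$ (via Lemma~\ref{positive}(ii), a density-plus-continuity argument, and the inverse relation $\phi(e^{tx})\phi(e^{-tx})=1$), and then identify $c$ with $\phi(x)$ by exploiting the algebraic coincidence
\[
(\alpha i+\lambda)\Bigl(1+\tfrac{i\lambda}{\alpha}\Bigr)=i\Bigl(\alpha+\tfrac{\lambda^2}{\alpha}\Bigr)\in i\mathbb R,
\]
so that the real part of $\phi(\alpha i\mathbf 1+x)\,\phi(\mathbf 1+ix/\alpha)$ must vanish. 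This is a neat trick: it trades the separate ``square'' lemma for a direct real/imaginary-part comparison, and it handles the cases $\phi(x)=0$ and $\phi(x)\neq0$ uniformly. A small presentational point: in your Step~2 you should fix some $t\neq0$ before concluding $\nu=c$ from $e^{tc}=e^{t\nu}$; this is clearly what you intend, but it is worth saying explicitly.
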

\begin{proof}
	If $t=0$ the result is clear; so we assume $t\not=0$. From Lemma~\ref{lem31}(iii) and Lemma~\ref{zerocase}, it suffices to prove the result for the case $\phi(x)=0=\phi(tx)$. In particular it remains to prove that $\phi\left(e^{tx}\right)=1$. Consider $ \phi(e^{tx})\phi(\mathbf 1+i x)\in \sigma(e^{tx}+ie^{tx} x)$ with $\phi(\mathbf 1+i x)=1+i\lambda$ where $\lambda \in \sigma(x)$. We then have, using the Spectral Mapping Theorem as before, that $$\phi(e^{tx})+i\phi(e^{tx})\lambda=e^{t\beta}+i\beta e^{t\beta},$$
	for some $\beta\in\sigma(x).$ By comparison we see that $\lambda=\beta$ and thus $\phi(e^{tx})=e^{t\lambda}$. A similar argument with $\phi\left(e^{(tx)^2}\right)\phi(\mathbf 1+ix)$ yields $\phi\left(e^{(tx)^2}\right)=e^{(t\lambda)^2}$. Since $(tx)^2$ is positive Lemma~\ref{positive} gives $\phi\left(e^{(tx)^2}\right)=e^{\phi\left((tx)^2\right)}$. But, by Lemma~\ref{square}, $\phi(tx)=0\Rightarrow\phi\left((tx)^2\right)=0$ from which we conclude that $e^{(t\lambda)^2}=1$. This implies that $\lambda^2=0$ and hence $\phi(e^{tx})=e^{t\lambda}=1$.
	\end{proof}

\begin{lemma}\label{finduc}
 $\phi$ has the following properties:
	\begin{itemize}
\item[(i)]{ If $x\in\mathcal S$, then $\phi\left(e^{\lambda x}\right)=e^{\lambda\phi(x)}$ holds for all $\lambda\in\mathbb C$.}		
\item[(ii)]{If $x,x_1,\dots, x_n\in\mathcal S$, and $\lambda\in\mathbb C$, then 
	$$\phi\left(e^{\lambda x}e^{x_1}\cdots e^{x_{n}}\right)=\phi\left(e^{\lambda x}\right)\phi\left(e^{x_{1}}\right)\cdots \phi\left(e^{x_{n}}\right).$$   }	
\item[(iii)]{If $x,y\in \mathcal{S}$, then $\phi(x+y)=\phi(x)+\phi(y)$.}
	\end{itemize}
\end{lemma}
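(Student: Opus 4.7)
My plan is to prove (i), (ii), and (iii) in that order, with (iii) following from the first two via the Lie--Trotter formula. For part (i), Lemma~\ref{selfexp} already gives $\phi(e^{tx})=e^{t\phi(x)}$ when $t\in\mathbb R$. To extend to arbitrary $\lambda\in\mathbb C$, I would note that (P1) combined with the spectral mapping theorem yields $\phi(e^{\lambda x})\in\sigma(e^{\lambda x})=e^{\lambda\sigma(x)}$, so $\phi(e^{\lambda x})=e^{\lambda\mu}$ for some $\mu\in\sigma(x)$. Applying (P1) to the pair $e^{\lambda x},e^{sx}$ with $s\in\mathbb R$ then gives $e^{\lambda\mu+s\phi(x)}=e^{(\lambda+s)\nu_s}$ for some $\nu_s\in\sigma(x)$; equating moduli on both sides yields the real identity $\Rea(\lambda)\mu+s\phi(x)=(\Rea(\lambda)+s)\nu_s$. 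Choosing $s=-\Rea(\lambda)$ (when $\Rea(\lambda)\neq 0$) collapses the right-hand side to zero and forces $\mu=\phi(x)$; the case $\Rea(\lambda)=0$ is handled by choosing any $s\neq 0$, which gives $\nu_s=\phi(x)$, after which substitution back into the original equation shows that the exponentials agree.

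For part (ii), I would proceed by induction on $n$, the case $n=0$ being trivial. Set $u:=e^{\lambda x}e^{x_1}\cdots e^{x_n}$. By the inductive hypothesis applied to $u_{n-1}:=e^{\lambda x}e^{x_1}\cdots e^{x_{n-1}}$, together with (P1) for the pair $(u_{n-1},e^{x_n})$ and part (i), the putative value $e^{\lambda\phi(x)+\phi(x_1)+\cdots+\phi(x_n)}$ belongs to $\sigma(u)$. To identify $\phi(u)$ with this element rather than with some other spectral point, I would apply (P1) to $(u,e^{-x_n})$ and to $(e^{-\lambda x},u)$, using part (i) to compute $\phi(e^{-x_n})=e^{-\phi(x_n)}$ and $\phi(e^{-\lambda x})=e^{-\lambda\phi(x)}$; the two resulting inclusions produce, respectively, $\phi(u)=e^{\phi(x_n)+\phi(x_1)+\cdots+\phi(x_{n-1})+\lambda\mu}$ for some $\mu\in\sigma(x)$ and $\phi(u)=e^{\lambda\phi(x)+\phi(x_1)+\cdots+\phi(x_{n-1})+\nu}$ for some $\nu\in\sigma(x_n)$. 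Equating these modulo $2\pi i$ leads to $\lambda(\mu-\phi(x))=\nu-\phi(x_n)+2\pi ik$; replacing $(x,x_1,\ldots,x_n)$ by $(tx,tx_1,\ldots,tx_n)$ for sufficiently small $t>0$ eliminates the $k$-ambiguity, and comparing real and imaginary parts then pins $\mu$ and $\nu$ to the required values when $\Ima(\lambda)\neq 0$; the remaining case $\lambda\in\mathbb R$ will likely need a supplementary open-closed continuation argument along the family of rescaled parameters.

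Finally, for part (iii), let $x,y\in\mathcal S$. The Lie--Trotter formula (Theorem~\ref{LieTrot}) gives $(e^{x/N}e^{y/N})^N\to e^{x+y}$ as $N\to\infty$. By (P3) and part (i), $\phi((e^{x/N}e^{y/N})^N)\to\phi(e^{x+y})=e^{\phi(x+y)}$. On the other hand, expanding the $N$-th power as a product of $2N$ exponentials of self-adjoint elements and applying part (ii) with $\lambda=1$ yields $\phi((e^{x/N}e^{y/N})^N)=\phi(e^{x/N})^N\phi(e^{y/N})^N=e^{\phi(x)+\phi(y)}$ for every $N$. Equating the two limits gives $e^{\phi(x+y)}=e^{\phi(x)+\phi(y)}$; since $\phi$ takes real values on $\mathcal S$ (because $\sigma$ of a self-adjoint element lies in $\mathbb R$), the injectivity of the real exponential yields $\phi(x+y)=\phi(x)+\phi(y)$. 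The main obstacle, as expected, will be part (ii)---especially carrying out the scaling and real/imaginary part comparison rigorously in the subcase $\lambda\in\mathbb R$, where the constraint derived from equating the two representations of $\phi(u)$ fails to separate the unknowns on its own.
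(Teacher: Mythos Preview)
Your arguments for parts (i) and (iii) are essentially the paper's own: for (i) you reduce to the known real case via (P1) and a modulus comparison (the paper takes the specific choice $s=-\Rea(\lambda)$ from the outset and then extends by (P3)); for (iii) you combine (ii), (i), the Lie--Trotter formula and the injectivity of the real exponential, exactly as the paper does.

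The genuine gap is in part (ii). When you apply (P1) to $(u,e^{-x_n})$ you obtain
\[
\phi(u)\,e^{-\phi(x_n)}\in\sigma\bigl(e^{\lambda x}e^{x_1}\cdots e^{x_{n-1}}\bigr),
\]
and when you apply (P1) to $(e^{-\lambda x},u)$ you obtain
\[
e^{-\lambda\phi(x)}\,\phi(u)\in\sigma\bigl(e^{x_1}\cdots e^{x_n}\bigr).
\]
Neither spectrum on the right is known: these are products of non-commuting exponentials, and the Spectral Mapping Theorem says nothing about them. Your asserted representations $\phi(u)=e^{\phi(x_n)+\cdots+\lambda\mu}$ with $\mu\in\sigma(x)$, and $\phi(u)=e^{\lambda\phi(x)+\cdots+\nu}$ with $\nu\in\sigma(x_n)$, therefore do not follow. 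The induction hypothesis tells you the value of $\phi$ on such products, not their spectra, so it cannot be invoked here.

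The fix, which is what the paper does, has two ingredients you are missing. First, establish the translation formula $\phi(z+\alpha\mathbf 1)=\phi(z)+\alpha$ for $z\in\mathcal S$ and $\alpha\in\mathbb R$ (an easy consequence of Lemma~\ref{selfexp}), and use it together with \eqref{obs2} to reduce to the case $\phi(x)=\phi(x_1)=\cdots=\phi(x_n)=0$. Second, in the induction step peel off \emph{all but one} exponential at once: pair $u$ with the inverse product $e^{-x_n}\cdots e^{-x_1}$ (respectively $e^{-x_{n-1}}\cdots e^{-\lambda x}$). Then (P1) lands you in $\sigma(e^{\lambda x})$ (respectively $\sigma(e^{x_n})$), which \emph{are} computable by the Spectral Mapping Theorem, while the induction hypothesis shows the peeled-off factor has $\phi$-value $1$. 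This yields $\phi(u)=e^{\lambda\beta_\lambda}=e^{\alpha_\lambda}$ with $\beta_\lambda\in\sigma(x)$, $\alpha_\lambda\in\sigma(x_n)$ both real; comparing real parts gives $\alpha_\lambda=\Rea(\lambda)\beta_\lambda$, and then $e^{\Ima(\lambda)\beta_\lambda i}\in\mathbb R$ forces $\Ima(\lambda)\beta_\lambda\in\pi\mathbb Z$. A continuity/connectedness argument (the integer must be constant, and boundedness of $\beta_\lambda$ forces it to be $0$) then gives $\beta_\lambda=0$ on $\{\Rea(\lambda)\neq0,\ \Ima(\lambda)\neq0\}$, and (P3) extends to all $\lambda$. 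Your proposed $t$-scaling trick is aiming at the same $2\pi i$-ambiguity, but as written it is not a substitute for this argument, and you yourself flag the $\lambda\in\mathbb R$ case as unresolved.
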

\begin{proof}
 We first derive equation \eqref{transinv} which will be used throughout the remainder of the proof:
   Let $x\in\mathcal S$ and let $\alpha\in\mathbb R$. By the assumption on $\phi$ we have that
$$\phi\left(e^{x+\alpha\mathbf 1}\right)\phi\left(e^{-x}\right)\in\sigma\left(e^{\alpha\mathbf 1}\right)=\{e^\alpha\}.$$ 
Hence, using Lemma~\ref{selfexp}, it follows that 
$$e^{\phi(x+\alpha\mathbf 1)-\phi(x)}=e^\alpha$$ which implies that 
\begin{equation}\label{transinv}
\phi(x+\alpha\mathbf 1)=\phi(x)+\alpha\ \mbox{ if }\alpha\in\mathbb R.
\end{equation}

(i): If $\lambda\in\mathbb C$, then, by the hypothesis on $\phi$, 
we have that 
\begin{equation}\label{alt}
\phi\left(e^{\lambda x}\right)\phi\left(e^{-\Rea(\lambda)x}\right)\in\sigma\left(e^{\Ima(\lambda)xi}\right).
\end{equation}	
We may write $\phi(e^{\lambda x})=e^{\lambda\alpha_\lambda}$ where $\alpha_\lambda\in\sigma(x)$ depends on $\lambda$. So, using Lemma~\ref{selfexp},
$$\phi\left(e^{\lambda x}\right)\phi\left(e^{-\Rea(\lambda)x}\right)=e^{\Rea(\lambda)[\alpha_\lambda-\phi(x)]}e^{\Ima(\lambda)\alpha_\lambda i},$$
and on the other hand, using \eqref{alt},
$$\phi\left(e^{\lambda x}\right)\phi\left(e^{-\Rea(\lambda)x}\right)=e^{\Ima(\lambda)\beta_\lambda i}$$ where 
 $\beta_\lambda\in\sigma(x)$ depends on $\lambda$. This forces
 $\Rea(\lambda)[\alpha_\lambda-\phi(x)]=0$ from which  
 $\alpha_\lambda=\phi(x)$ when $\Rea(\lambda)\not=0$. We may therefore conclude that
 $\phi\left(e^{\lambda x}\right)=e^{\lambda\phi(x)}$ holds whenever $\Rea(\lambda)\not=0$ and hence, by (P3),  $\phi\left(e^{\lambda x}\right)=e^{\lambda\phi(x)}$ holds for all $\lambda\in\mathbb C$. \\
(ii): Take $x,y\in\mathcal S$ and assume that $\phi(x)=\phi(y)=0$. Let $\lambda\in\mathbb C$ arbitrary. By the hypothesis on $\phi$ we have that
$$\phi\left(e^{-\lambda x}\right)\phi\left(e^{\lambda x}e^y\right)\in\sigma\left(e^y\right).$$ From part (i) of the proof, together with the assumption that $\phi(x)=0$, it then follows that $\phi\left(e^{-\lambda x}\right)=1$ whence 
$\phi\left(e^{\lambda x}e^y\right)\in\sigma\left(e^y\right).$ So we can write $\phi\left(e^{\lambda x}e^y\right)=e^{\alpha_\lambda}$ where 
$\alpha_\lambda\in\sigma(y)$ depends on $\lambda$. Similarly, since $\phi(e^{-y})=1$, it follows from the hypothesis on $\phi$ that
$$\phi\left(e^{\lambda x}e^y\right)=\phi\left(e^{\lambda x}e^y\right)\phi(e^{-y})\in\sigma\left(e^{\lambda x}\right),$$
and so we can write $\phi\left(e^{\lambda x}e^y\right)=e^{\lambda\beta_\lambda}$ where $\beta_\lambda\in\sigma(x)$ depends on $\lambda$. 
Together we have that $e^{\alpha_\lambda}=e^{\lambda\beta_\lambda}$ for all $\lambda\in\mathbb C$. If we then write
\begin{equation}\label{alt2}
e^{\alpha_\lambda}=e^{\Rea(\lambda)\beta_\lambda+\Ima(\lambda)\beta_\lambda\,i}=e^{\Rea(\lambda)\beta_\lambda}e^{\Ima(\lambda)\beta_\lambda\,i}
\end{equation}
 it follows, from the fact that $\alpha_\lambda,\beta_\lambda\in\mathbb R$, that $\alpha_\lambda=\Rea(\lambda)\beta_\lambda$. Since $\lambda\mapsto \alpha_\lambda$ is continuous this proves that the function $\lambda\mapsto\beta_\lambda$ is continuous when $\Rea(\lambda)\not=0$. Further, since \eqref{alt2} forces $e^{\Ima(\lambda)\beta_\lambda\,i}\in\mathbb R$, we have that
$$\sin(\Ima(\lambda)\beta_\lambda)=0\Rightarrow\Ima(\lambda)\beta_\lambda=k_\lambda\pi$$ where $\lambda\mapsto k_\lambda$ is a continuous function on $\Rea(\lambda)\not=0$. Since $k_\lambda$ takes values in $\mathbb Z$ it must be constant, and in particular, since $\beta_\lambda$ is bounded, $k_\lambda=0$ when $\Rea(\lambda)\not=0$. So we observe that $\beta_\lambda=0$ when both $\Rea(\lambda)\not=0$, and $\Ima(\lambda)\not=0$. Thus $e^{\alpha_\lambda}=1$ when $\Rea(\lambda)\not=0$, and $\Ima(\lambda)\not=0$, and by (P3) we find that $\phi\left(e^{\lambda x}e^y\right)=1$ holds for all $\lambda\in\mathbb C$.
Now if we replace $x,y\in\mathcal S$ by respectively $x-\phi(x)\mathbf 1$ and $y-\phi(y)\mathbf 1$ then both elements belong to $\mathcal S$ and, by \eqref{transinv}, both are in the kernel of $\phi$. By the preceding paragraph we then have, using \eqref{obs2}, 
\begin{align*}
1&=\phi\left(e^{\lambda(x-\phi(x)\mathbf 1)}e^{y-\phi(y)\mathbf 1}\right)=\phi\left(e^{-[\lambda\phi(x)+\phi(y)]\mathbf 1}e^{\lambda x}e^y\right)\\&=
\phi\left(e^{-[\lambda\phi(x)+\phi(y)]}e^{\lambda x}e^y\right)=e^{-[\lambda\phi(x)+\phi(y)]}\phi\left(e^{\lambda x}e^y\right)
\end{align*}  
from which we obtain $\phi\left(e^{\lambda x}e^y\right)=\phi\left(e^{\lambda x})\phi(e^y\right)$.
To extend the preceding formula, let us assume, for any collection of $n+1$ elements $\{x,x_1,\dots,x_n\}\subset S$, it holds that
$$\phi\left(e^{\lambda x}e^{x_1}\cdots e^{x_n}\right)=\phi\left(e^{\lambda x}\right)\phi\left(e^{x_1}\right)\cdots\phi\left(e^{x_n}\right)\ \ (\lambda\in\mathbb C).$$
 Take  $\{x,x_1,\dots,x_n, x_{n+1}\}\subset S$ and assume 
\begin{equation}\label{assume}
\phi(x)=\phi(x_1)=\cdots=\phi(x_{n+1})=0.
\end{equation}
Observe, from the hypothesis on $\phi$, that
$$\phi\left(e^{\lambda x}e^{x_1}\cdots e^{x_{n+1}}\right)\phi\left(e^{-x_{n+1}}\cdots e^{-x_1}\right)\in\sigma\left(e^{\lambda x}\right),$$
and then, from the induction assumption together with \eqref{assume} and (i), that 
$$\phi\left(e^{\lambda x}e^{x_1}\cdots e^{x_{n+1}}\right)\in\sigma\left(e^{\lambda x}\right).$$
So we can write 
$$\phi\left(e^{\lambda x}e^{x_1}\cdots e^{x_{n+1}}\right)=e^{\lambda\beta_\lambda}\mbox{ where }\beta_\lambda\in\sigma(x).$$
On the other hand we also have 
$$\phi\left(e^{-x_n}e^{-x_{n-1}}\cdots e^{-x_1}e^{-\lambda x}\right)\phi\left(e^{\lambda x}e^{x_1}\cdots e^{x_{n+1}}\right)\in\sigma\left(e^{x_{n+1}}\right),$$
and, again using the induction assumption together with \eqref{assume} and (i),
$$\phi\left(e^{\lambda x}e^{x_1}\cdots e^{x_{n+1}}\right)=e^{\alpha_\lambda}\mbox{ where }\alpha_\lambda\in\sigma(x_{n+1}).$$
Using the same argument that was used for $e^{\lambda x}e^y$ we obtain
$$\phi\left(e^{\lambda x}e^{x_1}\cdots e^{x_{n+1}}\right)=1.$$
If we consequently replace the collection $\{x,x_1,\dots,x_n, x_{n+1}\}$ by $$\{x-\phi(x)\mathbf 1,x_1-\phi(x_1)\mathbf 1,\dots,x_n-\phi(x_n)\mathbf 1, x_{n+1}-\phi(x_{n+1})\mathbf 1\},$$
and use the argument that was used for $e^{\lambda x}e^y$ we arrive at 
$$\phi\left(e^{\lambda x}e^{x_1}\cdots e^{x_{n+1}}\right)=\phi\left(e^{\lambda x}\right)\cdots \phi\left(e^{x_{n+1}}\right).$$
So the result follows by induction.\\
(iii): If $n\in\mathbb N$ then, from (ii), it follows that 
$$\phi\left(\left[e^{x/n}e^{y/n}\right]^n\right)=e^{\phi(x)+\phi(y)}.$$
But, by (P3), we have 
$$\lim_n\phi\left(\left[e^{x/n}e^{y/n}\right]^n\right)=\phi\left(\lim_n\left[e^{x/n}e^{y/n}\right]^n\right)=\phi\left(e^{x+y}\right)=e^{\phi(x+y)}.$$
Since $\phi$ takes real values on $\mathcal S$ we have the result. 
\end{proof}

\begin{theorem}\label{Cstar}
  The formula $$\psi_\phi(x):=\phi\left(\Rea(x)\right)+i\phi\left(\Ima(x)\right)$$ defines a character on $A$.
\end{theorem}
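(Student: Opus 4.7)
The strategy is to reduce the theorem to the Gleason--Kahane--\.{Z}elazko Theorem (Theorem~\ref{origin}): it suffices to show that $\psi_\phi$ is a continuous $\mathbb C$-linear functional satisfying $\psi_\phi(x)\in\sigma(x)$ for every $x\in A$. Continuity is immediate from (P3) together with the (isometric) continuity of the involution on a $C^\star$-algebra, and $\psi_\phi(\mathbf 1)=\phi(\mathbf 1)+i\phi(0)=1$ since (P1) applied at $x=y=0$ forces $\phi(0)=0$.

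Next I would establish $\mathbb C$-linearity. The key intermediate step is that $\phi$ is $\mathbb R$-linear on $\mathcal S$: additivity is precisely Lemma~\ref{finduc}(iii), while real homogeneity follows from Lemma~\ref{lem31}(ii) when $\phi(x)\neq 0$ and from Lemma~\ref{zerocase} (together with $\phi(0)=0$) when $\phi(x)=0$. Writing $x=\Rea(x)+i\Ima(x)$ with both summands in $\mathcal S$, this $\mathbb R$-linearity transfers immediately to $\psi_\phi$ on $A$, and the identities $\Rea(ix)=-\Ima(x)$ and $\Ima(ix)=\Rea(x)$ give $\psi_\phi(ix)=i\psi_\phi(x)$, completing the $\mathbb C$-linearity.

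The main obstacle is the spectral inclusion $\psi_\phi(x)\in\sigma(x)$ for arbitrary $x\in A$. Writing $x=h+ik$ with $h,k\in\mathcal S$, Lemma~\ref{finduc}(i) gives $\phi(e^{h/n})=e^{\phi(h)/n}$ and $\phi(e^{ik/n})=e^{i\phi(k)/n}$; applying (P1) and the Spectral Mapping Theorem then yields
$$e^{\psi_\phi(x)}=\bigl(\phi(e^{h/n})\phi(e^{ik/n})\bigr)^{n}\in\sigma\bigl((e^{h/n}e^{ik/n})^n\bigr)\qquad\text{for every }n\in\mathbb N.$$
Letting $n\to\infty$, the Lie--Trotter Formula (Theorem~\ref{LieTrot}) together with upper semicontinuity of the spectrum places $e^{\psi_\phi(x)}$ in $\sigma(e^x)$; the Spectral Mapping Theorem then gives $\psi_\phi(x)\in\sigma(x)+2\pi i\mathbb Z$. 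To remove the integer ambiguity, I would replace $x$ by $x/n$: by $\mathbb C$-linearity the same argument yields $\psi_\phi(x)/n=\mu_n+2\pi i k_n$ with $\mu_n\in\sigma(x)/n$ and $k_n\in\mathbb Z$, and since both $\psi_\phi(x)/n$ and $\mu_n$ tend to $0$, the integer $k_n$ must vanish for all sufficiently large $n$, forcing $\psi_\phi(x)\in\sigma(x)$.

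With continuity, $\mathbb C$-linearity, and the spectral inclusion all in place, Theorem~\ref{origin} concludes that $\psi_\phi$ is a character of $A$.
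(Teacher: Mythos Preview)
Your argument is correct, but it differs from the paper's in two places worth noting.

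First, you invoke the Gleason--Kahane--\.{Z}elazko Theorem and therefore spend effort assembling full $\mathbb C$-linearity of $\psi_\phi$ (additivity from Lemma~\ref{finduc}(iii), real homogeneity from Lemma~\ref{lem31}(ii) and Lemma~\ref{zerocase}, and $\psi_\phi(ix)=i\psi_\phi(x)$ from the Cartesian identities). The paper instead appeals to the Kowalski--S\l{}odkowski Theorem, for which additivity of $\psi_\phi$ (an immediate consequence of Lemma~\ref{finduc}(iii)) together with the spectral condition already suffices; this spares the homogeneity discussion entirely.

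Second, and more interestingly, the spectral inclusion $\psi_\phi(x)\in\sigma(x)$ is obtained by a different mechanism. You pass through $e^{\psi_\phi(x)}\in\sigma(e^x)$ via Lie--Trotter and then rescale by $1/n$ to kill the $2\pi i\mathbb Z$ ambiguity; this works. The paper avoids the exponential inversion altogether: from (P1) one has $\phi(e^{tu})\phi(e^{itv})\in\sigma(e^{tu}e^{itv})$, and after the affine manipulation
\[
\frac{\phi(e^{tu})\phi(e^{itv})-1}{t}\in\sigma\!\left(\frac{e^{tu}e^{itv}-\mathbf 1}{t}\right),
\]
one lets $t\to 0$ and uses only that $A\setminus G(A)$ is closed to obtain $\phi(u)+i\phi(v)\in\sigma(u+iv)$ directly. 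The paper's route is shorter and sidesteps the periodicity of the exponential; yours is a natural extension of the Lie--Trotter machinery already in play and is equally valid.
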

\begin{proof}
	By Lemma~\ref{finduc}(iii), together with the Kowalski-S\l{}odkowski Theorem, $\psi_\phi$ would be a character if we can prove that $\psi_\phi(x)\in\sigma(x)$ for each $x\in A$. Write $x=u+i\,v$ where $u:=\Rea(x)$ and $v:=\Ima(x)$. By the hypothesis on $\phi$ it follows that
	$$\left[\phi\left(e^{tu}\right)\phi\left(e^{itv}\right)-1\right]/t\in\sigma\left(\left[e^{tu}e^{itv}-\mathbf 1\right]/t\right).$$
	Hence, by Lemma~\ref{finduc}(i), we have that 
	$$\left[e^{t\phi(u)}e^{it\phi(v)}-1\right]/t\in\sigma\left(\left[e^{tu}e^{itv}-\mathbf 1\right]/t\right).$$
	If we let $t\rightarrow 0$, then, using the fact that $A\setminus G(A)$ is closed in $A$, it follows that
	 $\phi(u)+i\phi(v)\in\sigma(u+i\,v)$.
\end{proof}	

\begin{lemma}\label{lastlem}
If $x_1,\dots, x_n\in\mathcal S$, and $\lambda_1,\dots,\lambda_n\in\mathbb C$, then 
\begin{align*}
\phi\left(e^{\lambda_1 x_1}\cdots e^{\lambda_nx_{n}}\right)&=\phi\left(e^{\lambda_1 x_1}\right)\cdots \phi\left(e^{\lambda_nx_{n}}\right)\\&=e^{\lambda_1\phi(x_1)}\cdots e^{\lambda_n\phi(x_n)}.
\end{align*}
\end{lemma}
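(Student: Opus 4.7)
The plan is induction on $n$; the second equality reduces to Lemma~\ref{finduc}(i) applied factorwise, so the content lies in the first equality. For $n=1$ it is immediate. In the inductive step one first reduces to the case $\phi(x_j)=0$ for every $j$ by the translation $x_j\mapsto x_j-\phi(x_j)\mathbf 1$, using \eqref{transinv} to keep $\phi$ vanishing on the new generators and \eqref{obs2} to strip out the resulting scalar factor $e^{-\sum_j\lambda_j\phi(x_j)}$ (permissible because $\prod e^{\lambda_j x_j}\in G(A)$). The task becomes showing $\phi(a)=1$ for $a:=\prod_{j=1}^{n+1}e^{\lambda_j x_j}$.

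Applying (P1) to the pair $(a,\, e^{-\lambda_{n+1}x_{n+1}}\cdots e^{-\lambda_2 x_2})$, with the second factor evaluating to $1$ by the inductive hypothesis, yields $\phi(a)\in\sigma(e^{\lambda_1 x_1})=e^{\lambda_1\sigma(x_1)}$, so $\phi(a)=e^{\lambda_1\beta}$ for some $\beta\in\sigma(x_1)\subset\mathbb R$. A symmetric peeling from the left produces $\phi(a)=e^{\lambda_{n+1}\alpha}$ with $\alpha\in\sigma(x_{n+1})\subset\mathbb R$.

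The crucial and most delicate step is then to deduce $\phi(a)=1$ from these two spectral representations, paralleling the endgame of Lemma~\ref{finduc}(ii). Write $\lambda_1\beta-\lambda_{n+1}\alpha=2\pi ik$ for some integer $k$ and split into real and imaginary parts; for $k=0$ this is a $2\times 2$ homogeneous real system with determinant $\Ima(\overline{\lambda_1}\lambda_{n+1})$, which forces $\beta=\alpha=0$ on the open dense set where $\lambda_1,\lambda_{n+1}$ are $\mathbb R$-linearly independent. Continuity of $\phi$ together with boundedness of $\beta,\alpha$ in the compact spectra makes $k$ a locally constant integer-valued function of $(\lambda_1,\dots,\lambda_{n+1})$; specializing to the limit $\lambda_1\to 0$ (where the inductive hypothesis directly gives $\phi(a)=1$) pins down $k=0$, so $\beta=\alpha=0$ and $\phi(a)=1$ on this generic region, and property (P3) extends the equality to the remaining degenerate parameter values.

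The main obstacle is exactly this final extraction: Lemma~\ref{finduc}(ii) enjoyed a real exponent that forced one spectral representation into $\mathbb R_{>0}$, immediately pinning down the other; here both end-factor scalings $\lambda_1$ and $\lambda_{n+1}$ are fully complex, so neither representation is automatically real, and the winding-number argument has to be run in two variables simultaneously, leaning on continuity of $\phi$ plus a limiting case to fix the integer $k$.
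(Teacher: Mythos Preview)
Your argument is correct and takes a genuinely different path from the paper at the inductive step. After the common reduction to $\phi(x_j)=0$, the paper never leaves the one-variable template established after \eqref{alt2}: it first proves an \emph{intermediate} case of $n+1$ factors in which one endpoint carries a \emph{real} exponent (so that one spectral representation is $e^{\alpha_\lambda}$ with $\alpha_\lambda\in\mathbb R$, exactly as in Lemma~\ref{finduc}(ii)), and only then, using this intermediate result to peel off $e^{-\Rea(\lambda_{n+1})x_{n+1}}$, forces $\phi(a)\in\sigma\bigl(e^{i\Ima(\lambda_{n+1})x_{n+1}}\bigr)$ and reruns the same one-variable argument. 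You instead peel solely with the $n$-factor hypothesis, landing at $e^{\lambda_1\beta}=e^{\lambda_{n+1}\alpha}$ with both endpoint scalars fully complex, and run a two-parameter integer argument directly. This is shorter, but note that the continuity of $\beta$ and $\alpha$ needed to make $k$ locally constant is not automatic: it comes from the real-part identity $\Rea(\lambda_1)\beta=\Rea(\lambda_{n+1})\alpha=\log|\phi(a)|$, which pins each down only on $\{\Rea(\lambda_1)\neq 0\}\cap\{\Rea(\lambda_{n+1})\neq 0\}$, a region with four connected components; your $\lambda_1\to 0$ limit (with $\Rea(\lambda_{n+1})\neq 0$ fixed, so that $\alpha$ remains continuous and $\lambda_1\beta\to 0$ by boundedness) still touches every component and forces $k=0$ on each, after which the determinant condition and (P3) finish as you say. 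The paper's route is more modular and recycles the machinery of Lemma~\ref{finduc} verbatim; yours is more direct but requires the extra connectivity bookkeeping you flagged in your last paragraph.
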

\begin{proof}
Let $x,y\in\mathcal S$ such that $\phi(x)=\phi(y)=0$. For any $\lambda\in\mathbb C$ consider the expression 
$\phi\left(e^{\gamma x}e^{\lambda y}\right)$ where $\gamma\in\mathbb C$ is arbitrary but fixed. By the assumption on $\phi$ we have 
$$\phi\left(e^{\gamma x}e^{\lambda y}\right)\phi\left(e^{-\gamma x}\right)\in\sigma\left(e^{\lambda y}\right),$$ and so,
by Lemma~\ref{finduc}(i), $\phi\left(e^{\gamma x}e^{\lambda y}\right)=e^{\lambda\alpha_{\lambda}}$ where $\alpha_\lambda$ is a function of $\lambda$ with values belonging to $\sigma(y)$. Again by the assumption on $\phi$ we have that 
$$\phi\left(e^{\gamma x}e^{\lambda y}\right)\phi\left(e^{-\Rea(\lambda)y}e^{-\gamma x}\right)\in\sigma\left(e^{\Ima(\lambda)yi}\right),$$ and so,  
since $-\Rea(\lambda)y\in\mathcal S$, Lemma~\ref{finduc}(i) and (ii) give
$\phi\left(e^{\gamma x}e^{\lambda y}\right)=e^{\Ima(\lambda)\beta_{\lambda}i}$ where $\beta_\lambda$ is a function of $\lambda$ with values belonging to $\sigma(y)$. Hence we obtain
$$e^{\Ima(\lambda)\beta_{\lambda}i}=e^{\Rea(\lambda)\alpha_\lambda}e^{\Ima(\lambda)\alpha_\lambda i},$$ from which it follows that $\alpha_\lambda=0$ whenever $\Rea(\lambda)\not=0$. Consequently $\phi\left(e^{\gamma x}e^{\lambda y}\right)=1$ for all $\lambda$ with $\Rea(\lambda)\not=0$ which extends to $\phi\left(e^{\gamma x}e^{\lambda y}\right)=1$ for all $\lambda\in\mathbb C$ via (P3). Now if we replace $x,y\in\mathcal S$ by respectively $x-\phi(x)\mathbf 1$ and $y-\phi(y)\mathbf 1$ then both elements belong to $\mathcal S$ and, by \eqref{transinv}, both are in the kernel of $\phi$. Exactly as in the proof of Lemma~\ref{finduc} we obtain  $$\phi\left(e^{\gamma x}e^{\lambda y}\right)=\phi\left(e^{\gamma x}\right)\phi\left(e^{\lambda y}\right)=e^{\gamma\phi(x)}e^{\lambda\phi(y)}.$$
 Assume now, for any collection of $n$ elements, $\lambda_1,\dots,\lambda_n\in\mathbb C$, and $n$ elements, $x_1,\dots,x_n\in\mathcal S$ it holds that 
 $$\phi\left(e^{\lambda_1x_1}\cdots e^{\lambda_nx_n}\right)=
 \prod_{i=1}^n\phi\left(e^{\lambda_ix_i}\right)=\prod_{i=1}^ne^{\lambda_i\phi(x_i)}.$$
 Let $z\in\mathcal S$. Consider the expression 
 $\phi\left(e^{\lambda x_1}\cdots e^{\lambda_nx_n}e^z\right)$ where $\lambda$ is a variable and $\lambda_2,\dots,\lambda_n$ are fixed. Suppose that $\phi(x_i)=0=\phi(z)$. 
 Then, from the induction hypothesis,
   $$\phi\left(e^{\lambda x_1}\cdots e^{\lambda_nx_n}e^z\right)=\phi\left(e^{-\lambda_n x_n}\cdots e^{-\lambda x_1}\right)\phi\left(e^{\lambda x_1}\cdots e^{\lambda_nx_n}e^z\right)=e^{\alpha_\lambda}$$ where 
 $\alpha_\lambda\in\sigma(z)$. 
 On the other hand, again using the induction hypothesis, we also have that 
  $$\phi\left(e^{\lambda x_1}\cdots e^{\lambda_nx_n}e^z\right)=\phi\left(e^{\lambda x_1}\cdots e^{\lambda_nx_n}e^z\right)\phi\left(e^{-z}e^{-\lambda_nx_n}\cdots e^{-\lambda_2x_2}\right)=e^{\lambda\omega_\lambda}$$ where 
  $\omega_\lambda\in\sigma(x_1)$. It thus follows, by using the same argument following equation \eqref{alt2} in the proof of Lemma~\ref{finduc}, that $\phi\left(e^{\lambda x_1}\cdots e^{\lambda_nx_n}e^z\right)=1$ for each $\lambda\in\mathbb C$.  Replacing  $\{x_1,\dots,x_n, z\}$ with $$\{x_1-\phi(x_1)\mathbf 1,\dots,x_n-\phi(x_n)\mathbf 1, z-\phi(z)\mathbf 1\},$$ as in the proof of Lemma~\ref{finduc}, we have, setting $\lambda=\lambda_1,$ that 
    $$\phi\left(e^{\lambda_1x_1}\cdots e^{\lambda_nx_n}e^z\right)=\left(\prod_{i=1}^ne^{\lambda_i\phi(x_i)}\right)e^{\phi(z)}.$$
  With the same induction hypothesis, let $x_1,\dots,x_n,x_{n+1}\in\mathcal S$, let $\lambda_1,\dots,\lambda_{n},\lambda\in\mathbb C$, where $\lambda$ is variable, and suppose $\phi(x_i)=0$ for $i=1,\dots,n+1.$ 
  Arguing as before, we have  $$\phi\left(e^{\lambda_1x_1}\cdots e^{\lambda_nx_n}e^{\lambda x_{n+1}}\right)\in\sigma\left(e^{\lambda x_{n+1}}\right)$$
  from which it follows that 
  $$\phi\left(e^{\lambda_1x_1}\cdots e^{\lambda_nx_n}e^{\lambda x_{n+1}}\right)=e^{\lambda\gamma_\lambda}$$
  where $\gamma_\lambda\in\sigma(x_{n+1})$. On the other hand, using the result derived in the preceding paragraph, we also have 
  $$\phi\left(e^{\lambda_1x_1}\cdots e^{\lambda_nx_n}e^{\lambda x_{n+1}}\right)=e^{\Ima(\lambda)\eta_\lambda i}$$
  where $\eta_\lambda\in\sigma(x_{n+1})$, and, using the same argument following equation \eqref{assume} in the proof of Lemma~\ref{finduc}, we obtain 
    $$\phi\left(e^{\lambda_1x_1}\cdots e^{\lambda_nx_n}e^{\lambda x_{n+1}}\right)=1.$$
    Replacing $x_i$ by $x_i-\phi(x_i)\mathbf 1$ for each $i$, using \eqref{obs2}, and setting $\lambda=\lambda_{n+1}$ we then deduce
   $$\phi\left(e^{\lambda_1x_1}\cdots e^{\lambda_{n+1}x_{n+1}}\right)=
   \prod_{i=1}^{n+1}\phi\left(e^{\lambda_ix_i}\right)=\prod_{i=1}^{n+1}e^{\lambda_i\phi(x_i)}.$$ So the result follows by induction.
\end{proof}

\begin{theorem}
	$\psi_\phi$ agrees with $\phi$ on $G_{\mathbf 1}(A)$. 
\end{theorem}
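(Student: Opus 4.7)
My plan is to use Lemma~\ref{lastlem} as the key computational tool, combined with the Lie--Trotter Formula to approximate an arbitrary element of $G_{\mathbf 1}(A)$ by products of exponentials of self-adjoint elements with complex coefficients, and finally invoke the continuity of both $\phi$ and $\psi_\phi$.

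First I would verify that $\phi$ and $\psi_\phi$ agree on every element of the special form $e^{\lambda_1 x_1}\cdots e^{\lambda_n x_n}$ where $x_i\in\mathcal S$ and $\lambda_i\in\mathbb C$. Since $\phi(0)\in\sigma(0)=\{0\}$, the definition of $\psi_\phi$ gives $\psi_\phi(x)=\phi(x)$ for every $x\in\mathcal S$. Because $\psi_\phi$ is a character (Theorem~\ref{Cstar}), it is continuous and multiplicative, so it commutes with the exponential power series, yielding
\[
\psi_\phi\bigl(e^{\lambda_1 x_1}\cdots e^{\lambda_n x_n}\bigr)=\prod_{i=1}^n e^{\lambda_i\psi_\phi(x_i)}=\prod_{i=1}^n e^{\lambda_i\phi(x_i)}.
\]
Lemma~\ref{lastlem} asserts exactly that $\phi$ produces the same product, so the two agree on this class of elements.

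Next I would approximate a general $g\in G_{\mathbf 1}(A)$ by such products. Using the representation \eqref{G1}, write $g=e^{y_1}\cdots e^{y_k}$ with $y_j\in A$, and decompose each $y_j=u_j+iv_j$ with $u_j,v_j\in\mathcal S$. The Lie--Trotter Formula (Theorem~\ref{LieTrot}) gives $e^{y_j}=\lim_{n\to\infty}\bigl(e^{u_j/n}e^{iv_j/n}\bigr)^n$, and joint continuity of multiplication in $A$ then yields
\[
g=\lim_{n\to\infty}\prod_{j=1}^k\bigl(e^{u_j/n}e^{iv_j/n}\bigr)^n.
\]
Each finite approximant is a product of exponentials of the form $e^{\lambda x}$ with $x\in\mathcal S$ and $\lambda\in\{1/n,\,i/n\}$, so it falls under the class handled above.

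Finally I would apply the continuity of $\phi$ (which is (P3)) and of $\psi_\phi$ (automatic because $\psi_\phi$ is a character, or visible directly from its definition via continuous components) to pass the equality $\phi=\psi_\phi$ through the limit, obtaining $\phi(g)=\psi_\phi(g)$ for every $g\in G_{\mathbf 1}(A)$. I do not anticipate a genuine obstacle: all the heavy lifting sits inside Lemma~\ref{lastlem}, and the only subtlety worth noting is making sure that decomposing $y_j$ into its self-adjoint real and imaginary parts really does reduce every $e^{y_j}$ to an accessible Lie--Trotter limit, which is precisely the bridge between the general group $G_{\mathbf 1}(A)$ and the products handled by Lemma~\ref{lastlem}.
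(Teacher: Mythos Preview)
Your proposal is correct and follows essentially the same route as the paper: decompose each exponent into real and imaginary self-adjoint parts, apply Lemma~\ref{lastlem} to the Lie--Trotter approximants, and pass to the limit using (P3). The only cosmetic difference is that the paper observes directly that the value $\prod_j e^{\phi(u_j)+i\phi(v_j)}$ obtained from Lemma~\ref{lastlem} is independent of $n$, whereas you phrase the same step as establishing $\phi=\psi_\phi$ on all special products first and then invoking continuity of $\psi_\phi$ as well; both organizations are equivalent.
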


\begin{proof}
For $j=1,\dots,m$ let $x_j=u_j+iv_j$ where $u_j:=\Rea(x_j)$ and $v_j:=\Ima(x_j)$. By Lemma~\ref{lastlem} we have
\begin{equation}\label{product}
\phi\left[\prod_{j=1}^m\left(e^{u_j/n}e^{iv_j/n}\right)^n\right]
=\prod_{j=1}^me^{\phi(u_j)+i\phi(v_j)}.
\end{equation} 
Taking the limit as $n\rightarrow\infty$ on the left side of \eqref{product} gives
\begin{align*}
\phi\left(\prod_{j=1}^me^{x_j}\right)&=\prod_{j=1}^me^{\phi(u_j)+i\phi(v_j)}=\prod_{j=1}^me^{\psi_\phi(x_j)}=\psi_\phi\left(\prod_{j=1}^me^{x_j}\right),
\end{align*}
and the result is clear from \eqref{G1}.	
\end{proof}

 \bibliographystyle{amsplain}
 \bibliography{Spectral}

\end{document}